\definecolor{darkred}{rgb}{0.7,0,0}
\definecolor{highlightcircle}{rgb}{1,1,0}
\definecolor{multizero}{rgb}{1,1,1}
\definecolor{multione}{rgb}{0,0,0}
\newcommand{\multizeroname}{white\xspace}
\newcommand{\multionename}{black\xspace}
\newcommand{\notkappa}{c}
\newcommand{\notlambda}{b}
\newtheorem{theorem}{Theorem}[section]
\newtheorem{proposition}[theorem]{Proposition}
\newtheorem{lemma}[theorem]{Lemma}
\newtheorem{corollary}[theorem]{Corollary}
\newtheorem{conjecture}[theorem]{Conjecture}
\theoremstyle{definition}
\newtheorem{definition1}[theorem]{Definition}
\newtheorem{remark1}[theorem]{Remark}
\DeclareTextCommand{\textover}{PU}{\9040\076}
\newcommand{\Esz}{E_{s,z}}
\newcommand{\noell}{{(s_1, \ldots, \widehat{s_\ell}, \ldots, s_n)}}
\newcommand{\Dmult}[1]{{W_{(s_1, \ldots, s_{#1})}}}
\newcommand{\mysubsection}[1]{\medskip\subsection{#1}}
\DeclareMathOperator\exc{exc}
\DeclareMathOperator\Exc{Exc}
\DeclareMathOperator\fixed{fp}
\newcolumntype{C}{>{\centering\arraybackslash}X}
\newcommand\classic{permutation Wordle\xspace}
\newcommand\rainbow{multicolor permutation Wordle\xspace}
\newcommand\basicstrat{\textnormal{\textsc{CircularShift}}\xspace}
\newcommand\floor[1]{\left\lfloor{#1}\right\rfloor}
\begin{document}

\title{Permutation wordle}


\author[Samuel Kutin]{Samuel~Kutin}
\address{Center for Communications Research, 805 Bunn Drive, Princeton,
NJ 08540-1966, USA}
\email{kutin@ccr-princeton.org}

\author[Lawren Smithline]{Lawren~Smithline}
\address{Center for Communications Research, 805 Bunn Drive, Princeton,
NJ 08540-1966, USA}
\email{lawren@ccr-princeton.org}

\begin{abstract}
  We introduce a guessing game, ``permutation Wordle'',
  in which a guesser attempts to recover
   a setter's hidden
  permutation of the set $\{1, \ldots, n\}$. In each round, the guesser
  guesses a permutation, and, as in the game Wordle, learns
  which entries are correct.  We describe a natural strategy,
  which we conjecture is optimal. We show that the number of
  permutations this strategy solves in $k+1$ rounds is the Eulerian number $A(n,k)$.

  We also describe an extension to multicolor permutations: the setter chooses a
  permutation and also a coloring of $\{1, \ldots, n\}$.  We generalize our
    strategy, give a recurrence for the number of $s$-colored permutations solved
    in $k+1$ rounds, and relate these ``Wordle numbers'' to the Eulerian numbers
    and to higher-order Eulerian numbers.
\end{abstract}

\maketitle

\section{Introduction}
\label{sec:intro}

Jon Scieska~\cite{math-curse} writes,
``You can think of almost everything as a math problem.''  We present an example of
how this happens. We start with the game Wordle, turn it into a guessing game with
permutations, and find connections to a well-studied sequence:
the Eulerian numbers.

In 2021, Josh Wardle~\cite{Wordle-wikipedia} introduced the online game ``Wordle''.
In 2022, Wordle was purchased by {\emph{The New York Times}}.  By 2025, Wordle had been played
billions of times.

The goal of Wordle is to discover a secret five-letter word within six guesses.
In each round, the guesser chooses a five-letter English word
and learns which letters are
correct (in the secret word in the same place),
which are misplaced (in the secret word in a different place),
and which are wrong (not in the secret word at
all).\footnote{Wordle has well-defined rules for handling
repeated letters in the secret
word or in the guess, but these are not relevant to this paper.}

One can find strategy tips for Wordle online.  Optimal play requires an
understanding of the set of words in the language.  Even partial information
about the language helps:
for example, typical patterns of consonants and vowels, or
common word endings.  In order to turn Wordle into a math problem, we make it more abstract.

Our new game is ``\classic''.
Choose a whole number $n$.  A ``word'' is a permutation of the set
$\{1, \ldots, n\}$: each number appears exactly once.
The secret is a permutation, and
each guess is also a
permutation. The feedback for \classic is simpler than the feedback for
original Wordle: no number is wrong.  Each number is either correct or misplaced,
because each appears somewhere in the secret.

What is the optimal strategy? How many guesses do we need on average?
More generally, what is the probability that we solve after exactly $k$ wrong guesses?
In Section~\ref{sec:circular-shift} we offer a strategy, ``\basicstrat'', which we conjecture is the
best possible.  In Section~\ref{sec:eulerian} we show that \basicstrat solves
$A(n,k)$ of the $n!$ permutations in $k+1$ guesses, where $A(n,k)$ are the
\emph{Eulerian numbers}, a sequence that
frequently arises in the analysis of permutations~\cite{petersen}.  

The appearance of Eulerian numbers is a sign that we have found a ``natural''
math problem from Wordle---it is a satisfying answer. Another sign is that this
permutation-guessing problem seems to keep arising. The authors first considered it not
because of Wordle, but as an abstraction of permutation-guessing games on Sporcle.com.
In July 2025, Richard Stanley~\cite{Stanley2025} posed the same question;
he came to it from a guessing game in the app {\emph{Royal Match}}.

We can generalize this problem. In Section~\ref{sec:rainbow} we introduce a ``multicolor'' version of the game.
We can visualize \classic as selecting an order for $n$ cards bearing the ranks
$1$ through $n$.  We generalize to a larger deck of cards,
where each rank is present in more than one color;
the secret for \rainbow  corresponds to
an ordered set of $n$ cards where each number
appears exactly once.  The feedback for the guesser is still a set of which positions are
correctly filled, but with a twist: both the number and color must be correct.  We
adapt \basicstrat, and we show in Section~\ref{sec:higher}
that this multicolor problem leads to higher-order Eulerian numbers.

There are connections between Wordle and the classic board game Mastermind; in
Section~\ref{sec:mastermind} we discuss the existing literature on Mastermind, particularly
when the secret is known to be a permutation.
Finally, we state some open problems in Section~\ref{sec:open}.

\section{\basicstrat}
\label{sec:circular-shift}

We describe a simple strategy, \basicstrat, that solves \classic
in at most $n$ guesses.\footnote{Li and Zhu~\cite{LiZhu2024} showed that $n$ guesses are necessary.
See Section~\ref{sec:mastermind} for more discussion.}  Let $[n]$ denote the set $\{1, \ldots, n\}$,
and let $S_n$ denote the group of permutations on $[n]$.
For  now, we act if the setter chooses the secret permutation
$\sigma$ uniformly at random.  We show in
  Section~\ref{sec:avg-case} that we can restrict our analysis to this case, because the guesser can employ a randomized strategy.

  We denote the guesser's guesses by $\{\gamma_r\}$ for $r \ge 1$.
  In round $r$, the feedback
  \begin{equation}
    \label{eq:def_Ur}
    U_r = \{i \mid \gamma_r(i) = \sigma(i)\}
    \end{equation}
  is the set of positions where $\gamma_r$ agrees with the setter's permutation $\sigma.$
  Equivalently, $U_r$ is the set of fixed points of the permutation
  $\sigma^{-1} \gamma_r$.
  The guesser may use the information $U_k$ for $1 \leq k \leq r$
  in making the choice of $\gamma_{r+1}$.
  
  \mysubsection{Small $n$}
\label{sec:circular-shift-small-n}
  
We begin with the case $n=3$.  All first guesses are equivalent, so we may as well choose the identity permutation
as our first guess $\gamma_1$.
In response to this first guess, we learn the set of fixed points of $\sigma$.

    \begin{figure}[h]
      \begin{center}
        \begin{tabularx}{\textwidth}{CCC}
         \savecellbox{\scalebox{0.7}{\begin{wordle}[present color=WordleAbsent]{123}
      123
    \end{wordle}}} &
    \savecellbox{\scalebox{0.7}{\begin{wordle}[present color=WordleAbsent]{132}
          123
          132
    \end{wordle}}} &
    \savecellbox{\scalebox{0.7}{\begin{wordle}[present color=WordleAbsent]{213}
          123
          213
    \end{wordle}}} \\ [-\rowheight]
    \printcelltop & \printcelltop & \printcelltop \\
     & & \\
    \savecellbox{\scalebox{0.7}{\begin{wordle}[present color=WordleAbsent]{231}
                   123
                   312
                   231
    \end{wordle}}} &
    \savecellbox{\scalebox{0.7}{\begin{wordle}[present color=WordleAbsent]{312}
          123
          312
    \end{wordle}}} &
    \savecellbox{\scalebox{0.7}{\begin{wordle}[present color=WordleAbsent]{321}
          123
          321
    \end{wordle}}} \\ [-\rowheight]
    \printcelltop & \printcelltop & \printcelltop
    \end{tabularx}
      \end{center}
      \caption{Permutation Wordle when $n=3$.  We solve $1$ permutation in one
        round, $4$ in two rounds, and $1$ in three rounds.}
      \label{fig-n3}
    \end{figure}

    Figure~\ref{fig-n3} shows the different scenarios.
    There are three possibilities, depending on the number
    of fixed points of $\sigma.$
    If $\sigma$ has three fixed points, then $\sigma$ is the identity and we are done in
    one guess. If $\sigma$
    has exactly one fixed point, then $\sigma$ is one of the three permutations that swaps two
    values, and the fixed point identifies the swap;
    in round $2$, we swap those two values and get the correct answer.
    If $\sigma$ has no fixed points, then $\sigma$ is one of the two $3$-cycles,
    but we do not know which.  Our best strategy is
    to guess one of the $3$-cycles in round $2$; we will either be correct, or we will need
    a third guess.  Overall, the probability of finding $\sigma$ in one round is $1/6$;
    in two rounds, $4/6$; in three rounds, $1/6$.

    The analysis for $n=4$ is more complicated. Again, let $\gamma_1$ be the identity.
    If $\sigma$ is the identity, we finish in one round. If $\sigma$ has one or two fixed
    points, we can just keep those values the same in $\gamma_r$ for all $r > 1$; this lets
    us reduce to the $n=2$ or $n=3$ case, and we finish in a total of two or three rounds.

    What if $\sigma$ is one of the nine {\em derangements},
    meaning $\sigma$ has no fixed points?  One can work out (by hand or by computer
    exhaustion) that
    the optimal strategy is to let $\gamma_2$ be a $4$-cycle.\footnote{This
    strategy is optimal both in the sense of maximizing the probability of
    solving within a particular number of rounds
    and in the sense of minimizing the expected number of rounds.}
      If $\sigma = \gamma_2$, we are done in two
    rounds; if $\sigma^{-1} \gamma_2$ has one or two fixed points, we can deduce $\sigma$
    in the third round; otherwise, the remaining possibilities for $\sigma$ are $\gamma_2^2$ and
    $\gamma_2^3$, so we choose one of them and solve in either three or four
    rounds.  Figure~\ref{fig-n4} depicts this overall strategy.

    \begin{figure}
      \begin{center}
         \begin{tabularx}{0.94\textwidth}{CCCCCC}
    \savecellbox{\scalebox{0.6}{\begin{wordle}[present color=WordleAbsent]{1234}
      1234
    \end{wordle}}} &
    \savecellbox{\scalebox{0.6}{\begin{wordle}[present color=WordleAbsent]{1243}
          1234
          1243
    \end{wordle}}} &
    \savecellbox{\scalebox{0.6}{\begin{wordle}[present color=WordleAbsent]{1324}
          1234
          1324
    \end{wordle}}} &
    \savecellbox{\scalebox{0.6}{\begin{wordle}[present color=WordleAbsent]{1342}
          1234
          1423
          1342
    \end{wordle}}} &
    \savecellbox{\scalebox{0.6}{\begin{wordle}[present color=WordleAbsent]{1423}
          1234
          1423
    \end{wordle}}} &
    \savecellbox{\scalebox{0.6}{\begin{wordle}[present color=WordleAbsent]{1432}
          1234
          1432
    \end{wordle}}} \\ [-\rowheight]
    \printcelltop & \printcelltop & \printcelltop & \printcelltop & \printcelltop & \printcelltop\\
    &&&&&\\
    \savecellbox{\scalebox{0.6}{\begin{wordle}[present color=WordleAbsent]{2134}
          1234
          2134
    \end{wordle}}} &
    \savecellbox{\scalebox{0.6}{\begin{wordle}[present color=WordleAbsent]{2143}
          1234
          4123
          2143
    \end{wordle}}} &
    \savecellbox{\scalebox{0.6}{\begin{wordle}[present color=WordleAbsent]{2314}
          1234
          3124
          2314
    \end{wordle}}} &
    \savecellbox{\scalebox{0.6}{\begin{wordle}[present color=WordleAbsent]{2341}
          1234
          4123
          3412
          2341
    \end{wordle}}} &
    \savecellbox{\scalebox{0.6}{\begin{wordle}[present color=WordleAbsent]{2413}
          1234
          4123
          2413
    \end{wordle}}} &
    \savecellbox{\scalebox{0.6}{\begin{wordle}[present color=WordleAbsent]{2431}
          1234
          4132
          2431
    \end{wordle}}} \\ [-\rowheight]
    \printcelltop & \printcelltop & \printcelltop & \printcelltop & \printcelltop & \printcelltop\\
    &&&&&\\
    \savecellbox{\scalebox{0.6}{\begin{wordle}[present color=WordleAbsent]{3124}
          1234
          3124
    \end{wordle}}} &
    \savecellbox{\scalebox{0.6}{\begin{wordle}[present color=WordleAbsent]{3142}
          1234
          4123
          3142
    \end{wordle}}} &
    \savecellbox{\scalebox{0.6}{\begin{wordle}[present color=WordleAbsent]{3214}
          1234
          3214
    \end{wordle}}} &
    \savecellbox{\scalebox{0.6}{\begin{wordle}[present color=WordleAbsent]{3241}
          1234
          4213
          3241
    \end{wordle}}} &
    \savecellbox{\scalebox{0.6}{\begin{wordle}[present color=WordleAbsent]{3412}
          1234
          4123
          3412
    \end{wordle}}} &
    \savecellbox{\scalebox{0.6}{\begin{wordle}[present color=WordleAbsent]{3421}
          1234
          4123
          3421
    \end{wordle}}} \\ [-\rowheight]
    \printcelltop & \printcelltop & \printcelltop & \printcelltop & \printcelltop & \printcelltop\\
    &&&&&\\
    \savecellbox{\scalebox{0.6}{\begin{wordle}[present color=WordleAbsent]{4123}
          1234
          4123
    \end{wordle}}} &
    \savecellbox{\scalebox{0.6}{\begin{wordle}[present color=WordleAbsent]{4132}
          1234
          4132
    \end{wordle}}} &
    \savecellbox{\scalebox{0.6}{\begin{wordle}[present color=WordleAbsent]{4213}
          1234
          4213
    \end{wordle}}} &
    \savecellbox{\scalebox{0.6}{\begin{wordle}[present color=WordleAbsent]{4231}
          1234
          4231
    \end{wordle}}} &
    \savecellbox{\scalebox{0.6}{\begin{wordle}[present color=WordleAbsent]{4312}
          1234
          4123
          4312
    \end{wordle}}} &
    \savecellbox{\scalebox{0.6}{\begin{wordle}[present color=WordleAbsent]{4321}
          1234
          4123
          4321
    \end{wordle}}} \\ [-\rowheight]
    \printcelltop & \printcelltop & \printcelltop & \printcelltop & \printcelltop & \printcelltop
  \end{tabularx}   
      \end{center}
      \caption{Permutation Wordle when $n=4$.  We solve $1$ permutation in one
        round, $11$ in two rounds, $11$ in three rounds, and $1$ in four rounds.}
      \label{fig-n4}
      \end{figure}

    \mysubsection{An adversarial setter}
\label{sec:avg-case}
    
    Before we go on to general $n$, we consider the possibility of an
    adversarial setter.  We have described a deterministic strategy, which makes sense
    if the setter is choosing $\sigma$ uniformly at random.
    What if the setter uses knowledge of the guesser's (possibly randomized) strategy to
    choose the secret permutation?
    Can the setter make the game harder by choosing
    $\sigma$ from some other distribution?\footnote{
    We do not consider the case (common in analyses of Mastermind) of the adaptive
    setter who changes
    the secret permutation during the game to make the guesser take as long as possible.}

    For \classic, the answer turns out to be ``no'': we may as well consider a uniform
    random $\sigma$.  We can always randomize our strategy by introducing an
    extra, uniformly chosen permutation $\tau$.  Instead of the sequence of guesses
    $\gamma_1$, $\gamma_2$, $\ldots$, we guess $\gamma'_r = \tau\gamma_r$.
    The feedback in each round is the set of fixed points of $\sigma^{-1} {\gamma'_r}
    = (\sigma^{-1} \tau) \gamma_r$; this is equivalent to applying
    a deterministic strategy to $\tau^{-1}\sigma$, which is a uniform random permutation
    no matter the setter's strategy.

    For the rest of this paper, we consider deterministic strategies, but the reader
    can imagine that in practice we would multiply by a uniform random $\tau$.  The
    key is that we can analyze average-case behavior.  In contrast, in the classic
    game Mastermind, not all of the setter's codes are equivalent, so the literature
    tends to focus on worst-case behavior.  See Section~\ref{sec:mastermind} for more
    discussion.

    \mysubsection{The general algorithm}
\label{sec:circular-shift-general}
    
    We now extend our approach from $n=3$ and $4$ to general $n$.  As noted above,
    we describe a deterministic strategy.  

    We call our strategy \basicstrat.
    Informally, the idea is as follows: once we get a value correct (that is,
    $\gamma_r(i) = \sigma(i)$), we never change it in subsequent rounds.  We take
    the misplaced values in $\gamma_r$ and cycle them around one step to the right
    to form
    $\gamma_{r+1}$.  Over successive guesses, each value travels to the correct
    position.

 \begin{definition1}[\basicstrat]\label{def:basicstrat}
Let $\gamma_1$ be the identity in $S_n$.  Suppose that, in round
$r$, we submit guess $\gamma_r$ and receive feedback $U_r$ as in \eqref{eq:def_Ur}.
If $U_r = [n]$, we are done in $r$
rounds. Otherwise, we define $\gamma_{r+1}$ as follows:
write $[n] \setminus U_r = \{a_1, \ldots, a_{\ell}\}$ with $a_j < a_{j+1}$,
and assign
$$
\gamma_{r+1}(i) := \begin{cases}
\gamma_r(i) & \text{if $i \in U_r$,} \\
\gamma_r(a_{j-1}) & \text {if $i = a_j$ for some $j > 1$,} \\
\gamma_r(a_\ell) & \text {if $i = a_1$.}
\end{cases}
$$
 \end{definition1}

 The approach depicted in Figures~\ref{fig-n3} and~\ref{fig-n4} is \basicstrat.  For a larger example, see Figure~\ref{fig-n9}.

 \begin{figure}
   \begin{center}
  \begin{wordle}[present color=WordleAbsent]{724853169}
    123456789
    821354679
    728153469
    724853169
  \end{wordle}
   \end{center}
   \caption{The deterministic strategy \basicstrat guessing the permutation
$\sigma = (7, 2, 4, 8, 5, 3, 1, 6, 9)$.  The strategy
succeeds in four rounds.}
     \label{fig-n9}
 \end{figure}

 We conjecture that \basicstrat is optimal in the following sense: Suppose that the
 setter chooses $\sigma \in S_n$ uniformly at random.
Given a strategy ${\mathcal S}$,
for each $r$, let $P_r({\mathcal S})$ denote the probability that ${\mathcal S}$
guesses the correct permutation within $r$ guesses.

\begin{conjecture}\label{conj:optimal}
$P_r(\basicstrat)$ attains the maximal value of $P_r$ for all $r \ge 1$.
\end{conjecture}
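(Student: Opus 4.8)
The plan is to split the conjecture into a single hard upper bound plus an essentially free simultaneity argument. By Theorem~\ref{thm:main}, \basicstrat solves exactly $A(n,k)$ permutations in $k+1$ guesses, so $P_r(\basicstrat)=\frac{1}{n!}\sum_{k=0}^{r-1}A(n,k)$. Hence it suffices to prove, for every $r\ge 1$ and every adaptive deterministic strategy $\mathcal S$, the upper bound
\[
N_r(n):=\max_{\mathcal S}\bigl|\{\pi\in S_n : \mathcal S \text{ solves } \pi \text{ within } r \text{ guesses}\}\bigr|\ \le\ \sum_{k=0}^{r-1}A(n,k).
\]
Indeed, if this holds, then for each $r$ the maximal value of $P_r$ is at most $\frac1{n!}\sum_{k<r}A(n,k)=P_r(\basicstrat)$, so the single strategy \basicstrat attains the maximum for all $r$ at once; the simultaneity in the conjecture then costs nothing beyond the bound.

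To attack the bound I would fix the first guess (the identity, without loss of generality by relabeling) and split the decision tree along its feedback $S=\{i:\pi(i)=i\}$. Permutations with fixed-point set exactly $S$ restrict to derangements of $[n]\setminus S$, and the subtree reached on feedback $S$ is an independent derangement-recovery game on $m=n-\lvert S\rvert$ symbols using the remaining $r-1$ guesses. Optimizing each branch separately gives the exact recursion
\[
N_r(n)\ =\ 1+\sum_{m=2}^{n}\binom{n}{m}\,G(m,r-1),
\]
where $G(m,t)$ is the maximum number of derangements of $[m]$ solvable within $t$ guesses. The target value is dictated by the classical identity $A(n,k)=\sum_{m}\binom{n}{m}\,d(m,k)$, where $d(m,k)$ counts derangements of $[m]$ with $k$ exceedances (a permutation splits into its fixed points and a derangement on the rest, and exceedances live only on the latter). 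So I would aim for the matching sub-bound $G(m,t)\le\sum_{k=1}^{t}d(m,k)$; summing against $\binom{n}{m}$ and folding in $\binom{n}{0}d(0,0)=1$ then collapses, via the identity, to $\sum_{k<r}A(n,k)$ exactly.

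The real difficulty is that the derangement game is not self-similar: after guessing a derangement $\sigma$ and matching on a set $F$, the still-unknown positions carry a bijection that must avoid \emph{both} the reference and $\sigma$, and every later guess adds one more forbidden value. I would therefore run the induction on the genuinely general object — a permutation-recovery game with an arbitrary set of forbidden (position, value) pairs, whose consistent set is the perfect matchings of the complementary bipartite graph — and search for a statistic $\Phi$ on consistent matchings that specializes to the number of exceedances in the unconstrained case and to exceedances of the derangement part in the derangement case. The heart would be a monovariant argument: show that, whatever guess is made, any consistent $\pi$ that survives one round has its budget $\Phi(\pi)$ drop by at most one, so that in aggregate no strategy can solve $\pi$ in fewer than $\Phi(\pi)$ guesses, and that the number of consistent matchings with $\Phi<t$ meets the claimed count.

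The main obstacle is precisely this per-subproblem bound: one must control how forbidden pairs accumulate and pin down a potential that every strategy is forced to pay down one unit at a time. A naive induction fails because the correct right-hand counts for configurations with two or more forbidden values per position are not the derangement numbers $d(m,k)$, so the hypothesis must be strengthened to a statement uniform over forbidden-pair configurations, and it is not yet clear which invariant of the configuration the bound should depend on. As evidence that the target is right, the bound is immediate for $r=1$ (only the identity is solved, matching $A(n,0)=1$) and provable for $r=2$: the first round admits $2^{n}-1-n$ distinct non-winning feedbacks, each permitting at most one further solve, so $N_2(n)\le 1+(2^{n}-1-n)=2^{n}-n=A(n,0)+A(n,1)$, with equality. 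A parallel route I would pursue is a decision-tree exchange argument — transform any optimal tree, without decreasing any $P_r$, into one whose branching matches \basicstrat's, reducing the problem to the structural claim that a greedy ``solve one fresh derangement per feedback class'' tree is optimal. This trades the analytic potential for a combinatorial rearrangement, where the hard part shifts to proving that the exchange never strands a permutation into a strictly later round.
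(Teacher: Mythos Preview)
The paper does not prove this statement: it is stated as a conjecture, and the authors say explicitly that they have only verified it for $n\le 4$ (with computer experiments for $n=5,6$) and list it again as an open question in Section~\ref{sec:conclusion}. So there is no ``paper's own proof'' to compare against.

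Your proposal is a plan, not a proof, and you are candid about this. The reductions you set up are sound: the formula $P_r(\basicstrat)=\frac{1}{n!}\sum_{k<r}A(n,k)$ follows from Theorem~\ref{thm:main}; the fixed-point decomposition and the identity $A(n,k)=\sum_m\binom{n}{m}d(m,k)$ are correct; and your $r=2$ computation is right. But the core step --- the bound $G(m,t)\le\sum_{k=1}^{t}d(m,k)$, or more generally a potential $\Phi$ on the forbidden-pairs game that every strategy must decrement by at most one per round --- is exactly the content of the conjecture, and you explicitly say you do not yet have the right invariant. The difficulty you flag is real: after one round the residual problem is not a derangement game but a matching game with a growing list of forbidden pairs, and there is no obvious exceedance-like statistic on such configurations that both specializes correctly and is provably monovariant under an arbitrary guess. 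Until that invariant (or the exchange argument you sketch as an alternative) is supplied, the proposal remains a restatement of the problem in a useful but still-open form, which is precisely the status the paper assigns to it.
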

We have verified the conjecture for $n \le 6$.
See Section~\ref{sec:open} for further discussion.

Although we cannot prove that \basicstrat is optimal, we can completely analyze its cost.
To do so, we introduce some numbers that often arise in the combinatorics of
permutations.

\section{Eulerian numbers}
\label{sec:eulerian}

One could write a whole book on the Eulerian numbers~\cite{petersen}.  One
common definition of the Eulerian numbers
is in terms of the number of {\emph{excedances}} in a permutation:

\begin{definition1}\label{def:exc}
For $\pi \in S_n$, an \emph{excedance} is a value $i$ for which $\pi(i) > i$.  Let
$\Exc(\pi)$ denote the set of excedances: $\Exc(\pi) := \{i \mid \pi(i) > i\}$.
Write $\exc(\pi) := \#\Exc(\pi)$, the number of excedances of $\pi$.
\end{definition1}
\begin{definition1}\label{def:euler}
The \emph{Eulerian number} $A(n,k)$ is $\#\{\pi \in S_n \mid \exc(\pi) = k\}$.
\end{definition1}

For example, when $n = 3$, the only permutation with zero excedances is the identity, so $A(3,0) = 1$.
The only permutation in $S_3$ with two excedances is $(2,3,1)$, so $A(3,2) = 1.$  We deduce that $A(3,1) = 4$.

It is useful to write down a recurrence for the Eulerian numbers.

\begin{proposition}\label{prop:excrecur}
The Eulerian numbers $A(n,k)$ satisfy the recurrence
\begin{equation}\label{eq:eulerrecur}
A(n,k) = (k+1)A(n-1,k) + (n-k)A(n-1,k-1)
\end{equation}
with initial conditions $A(0,0) = 1$ and $A(0,k) = 0$ for $k \ne 0$.
\end{proposition}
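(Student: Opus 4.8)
The plan is to give a self-contained bijective proof by tracking the exceedance statistic under an ``insert $n$'' operation relating $S_{n-1}$ to $S_n$. The correspondence I would use is the following: every $\sigma \in S_n$ arises from a unique pair $(\pi, m)$ with $\pi \in S_{n-1}$ and $m \in \{1, \dots, n\}$, via the rule --- if $m = n$, set $\sigma(n) = n$ and $\sigma(i) = \pi(i)$ for $i < n$; if $m < n$, set $\sigma(m) = n$, $\sigma(n) = \pi(m)$, and $\sigma(i) = \pi(i)$ for $i \notin \{m, n\}$. The inverse takes $\sigma$ to $m = \sigma^{-1}(n)$ and then reads off $\pi$ (so a preliminary step is just checking that $\pi$ is a genuine permutation of $[n-1]$ and that these two maps are mutually inverse, which is routine since deleting the value $n$ and the position $n$ from a bijection $[n]\to[n]$ leaves a bijection $[n-1]\to[n-1]$). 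It then suffices to compare $\exc(\sigma)$ with $\exc(\pi)$ for each $(\pi,m)$.

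First I would dispose of the case $m = n$: here $\sigma(n) = n$ is not an exceedance, and for $i < n$ we have $\sigma(i) = \pi(i) < n$, so $i \in \Exc(\sigma)$ exactly when $i \in \Exc(\pi)$; hence $\exc(\sigma) = \exc(\pi)$. Next, for $m < n$: position $n$ contributes nothing, since $\sigma(n) = \pi(m) \le n-1 < n$; every position $i \notin \{m, n\}$ keeps its value and hence its exceedance status; and position $m$ always becomes an exceedance of $\sigma$, because $\sigma(m) = n > m$. Consequently $\exc(\sigma) = \exc(\pi)$ if $m$ was already an exceedance of $\pi$ (that is, $\pi(m) > m$), while $\exc(\sigma) = \exc(\pi) + 1$ if it was not (that is, $\pi(m) \le m$).

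Finally I would assemble the count. Fix $k$. Since $\pi \in S_{n-1}$ has all of its exceedances among $\{1, \dots, n-1\}$, a permutation $\pi$ with $\exc(\pi) = j$ has exactly $j$ positions $m \in \{1,\dots,n-1\}$ that are exceedances and exactly $(n-1) - j$ that are not. By the case analysis, the pairs $(\pi, m)$ producing a $\sigma$ with $\exc(\sigma) = k$ are precisely: those with $\exc(\pi) = k$ together with $m = n$ or $m$ an exceedance of $\pi$ ($1 + k$ choices of $m$ per such $\pi$), and those with $\exc(\pi) = k-1$ together with $m$ a non-exceedance position in $\{1,\dots,n-1\}$ ($(n-1)-(k-1) = n-k$ choices per such $\pi$). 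Because $(\pi,m) \leftrightarrow \sigma$ is a bijection, summing over $\pi$ yields $A(n,k) = (k+1)A(n-1,k) + (n-k)A(n-1,k-1)$; the initial conditions are immediate, as $S_0$ consists of the empty permutation, which has no exceedances.

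I expect no serious obstacle here --- the only points needing care are the verification that $(\pi,m)\mapsto\sigma$ is a well-defined bijection and the handling of the boundary values of $k$ (for $k = 0$ the second term vanishes since $A(n-1,-1) = 0$, and at $k = n-1$ one uses $A(n-1,n-1) = 0$), both of which are routine. One could alternatively derive the recurrence from the classical ``insert $n$ into one of $n$ slots'' argument for the descent statistic, but that would require first invoking the nontrivial equidistribution of descents and exceedances, so the direct exceedance argument above is preferable.
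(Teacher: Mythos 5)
Your proposal is correct and is essentially the paper's own argument: your pair $(\pi,m)$ with ``place $n$ at position $m$ and move $\pi(m)$ to position $n$'' is exactly the paper's decomposition $\sigma = \alpha\beta$ with $\beta$ fixing $n$ and $\alpha$ a transposition swapping $n$ with the value in that position, and the exceedance bookkeeping (position $m$ always becomes an exceedance, position $n$ never is, all else unchanged) matches the paper's case count of $k+1$ versus $n-k$ choices.
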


\begin{proof}
We induct on $n$.  The unique element of $S_0$ has $0$ excedances, verifying the
initial condition.

Every permutation $\pi \in S_n$ can be uniquely decomposed in the form $\alpha \beta$, where
$\beta$ fixes $n$ and $\alpha$ is either the identity, or swaps $n$ with some value $j$.
We view $\beta$ as an element of $S_{n-1}$.  

How can $\pi$ have exactly $k$ excedances?  There are two possibilities. First, $\beta$ could 
have $k$ excedances, and $\alpha$ is either the identity, or swaps $n$ with one of the $k$ places in $\Exc(\beta)$. Second, $\beta$ could have $k-1$
excedances, and $\alpha$ swaps $n$ with one of the $n-k$ places
not in $\Exc(\beta)$. This corresponds to the desired recurrence.
\end{proof}

The recurrence in Proposition \ref{prop:excrecur} implies the reflective symmetry $A(n,k) = A(n,n-k-1)$.

There is also a well-known generating function representation of the Eulerian numbers,
which relates to Euler's work on the subject:
\begin{equation}\label{eq:euler-gf}
\frac{\sum_k A(n,k) t^k}{(1-t)^{n+1}} = \sum_j (j+1)^n t^j.
\end{equation}
One way to derive \eqref{eq:euler-gf} is to start with the infinite sequence whose enumerator
is the right hand side of \eqref{eq:euler-gf} and then apply the finite difference operator $n+1$ times.
The result is a sequence with a polynomial enumerator, that is, a finite number of nonzero elements.
As shown for $n = 3$ in Figure~\ref{fig-fd}, the nonzero elements of the final sequence are the Eulerian numbers.

\begin{figure}
      \begin{center}
      \begin{tikzpicture}[xscale=0.433,yscale=.5]
        \node at (-12,0) {$\cdots$};
        \node at (-10,0) {0};
        \node at (-8,0) {0};
        \node at (-6,0) {0};
        \node at (-4,0) {0};
        \node at (-2,0) {0};
        \node at (0,0) {0};
        \node at (2,0) {1};
        \node at (4,0) {8};
        \node at (6,0) {27};
        \node at (8,0) {64};
        \node at (10,0) {125};
        \node at (12,0) {$\cdots$};
        \node at (-11,-1) {$\cdots$};
        \node at (-9,-1) {0};
        \node at (-7,-1) {0};
        \node at (-5,-1) {0};
        \node at (-3,-1) {0};
        \node at (-1,-1) {0};
        \node at (1,-1) {1};
        \node at (3,-1) {7};
        \node at (5,-1) {19};
        \node at (7,-1) {37};
        \node at (9,-1) {61};
        \node at (11,-1) {$\cdots$};
        \node at (-10,-2) {$\cdots$};
        \node at (-8,-2) {0};
        \node at (-6,-2) {0};
        \node at (-4,-2) {0};
        \node at (-2,-2) {0};
        \node at (0,-2) {{\color{red}1}};
        \node at (2,-2) {6};
        \node at (4,-2) {12};
        \node at (6,-2) {18};
        \node at (8,-2) {24};
        \node at (10,-2) {$\cdots$};
        \node at (-9,-3) {$\cdots$};
        \node at (-7,-3) {0};
        \node at (-5,-3) {0};
        \node at (-3,-3) {0};
        \node at (-1,-3) {{\color{red}1}};
        \node at (1,-3) {{\color{red}5}};
        \node at (3,-3) {6};
        \node at (5,-3) {6};
        \node at (7,-3) {6};
        \node at (9,-3) {$\cdots$};
        \node at (-8,-4) {$\cdots$};
        \node at (-6,-4) {0};
        \node at (-4,-4) {0};
        \node at (-2,-4) {{\color{red}1}};
        \node at (0,-4) {{\color{red}4}};
        \node at (2,-4) {{\color{red}1}};
        \node at (4,-4) {0};
        \node at (6,-4) {0};
        \node at (8,-4) {$\cdots$};
    \end{tikzpicture}
    \end{center}
  \caption{We start with the sequence whose enumerator is $\sum_j (j+1)^n t^j$
    (for $n = 3$) and apply finite differences (that is, multiply by $1-t$) four times.}
  \label{fig-fd}
  \end{figure}
Another common definition of the Eulerian numbers
is in terms of {\emph{descents}}.  A {\emph{descent}} of a
permutation is a place $i$ with $\pi(i) > \pi(i+1)$; the number of permutations in
$S_n$ with exactly $k$ descents is also given by $A(n,k)$.  One way to prove the equivalence of
this alternative formulation for $A(n,k)$ is 
verify that these numbers satisfy the same recurrence~\eqref{eq:eulerrecur}.
Foata and Sch\"utzenberger~\cite{FoSc} give an explicit bijection between permutations
with $k$ excedances and those with $k$ descents.

The Eulerian numbers show up in many other contexts.
For example, let $Y_1, \ldots, Y_n$ be independent uniform random
variables in the interval $(0,1)$, and define $X = \floor{\sum_{i=1}^n Y_i}$. The
probability that $X = k$ is $A(n,k)/n!$.  We now show that these numbers
arise in the study of \classic.

\mysubsection{The main theorem}
\label{sec:main-thm}

We will prove the following lemma, which connects \basicstrat to excedances:

\begin{lemma}\label{lemma:one-perm}
The number of guesses it takes \basicstrat to guess a permutation $\sigma$ is
$1 + \exc(\sigma)$.
\end{lemma}

Lemma~\ref{lemma:one-perm} immediately implies our main theorem:
\begin{theorem}\label{thm:main}
The number of permutations solved in $k+1$ guesses by \basicstrat is the
Eulerian number $A(n,k)$.
\end{theorem}

Since the Eulerian numbers have reflexive symmetry, we get a simple description
of the average-case performance of \basicstrat:
\begin{corollary}
  If $\sigma \in S_n$ is chosen uniformly at random, then the expected number
  of rounds for \basicstrat is $(n+1)/2$.
\end{corollary}

This leads us to a weaker version of Conjecture~\ref{conj:optimal}:
\begin{conjecture}\label{conj:optimal-weak}
For any strategy for \classic, the average number of rounds is at least $(n+1)/2$.
\end{conjecture}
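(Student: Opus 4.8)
\medskip
\noindent\textbf{A proof plan for Conjecture~\ref{conj:optimal-weak}.}
The plan is to prove the slightly stronger assertion that \basicstrat minimizes the \emph{expected} number of rounds among all strategies (still weaker than Conjecture~\ref{conj:optimal}, which constrains the whole distribution of the number of rounds). First reduce to strategies that always guess a permutation consistent with the feedback received so far; an inconsistent guess wastes a round and is weakly dominated. Such a strategy, played against a uniformly random $\sigma$, passes through \emph{configurations}: a configuration $\kappa$ consists of the set $V$ of still-undetermined positions, the set $W$ of still-available values (so $|V| = |W|$), and the set $F \subseteq V \times W$ of (position, value) pairs excluded by previous negative feedback; its live candidate set $C(\kappa)$ is the set of bijections $f \colon V \to W$ with $(i,f(i)) \notin F$ for all $i \in V$, and the game starts from the configuration $S_n$ (with $V = W = [n]$ and $F = \varnothing$). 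If $h(\kappa)$ denotes the least expected number of remaining rounds from $\kappa$, then $h(\kappa) = 1$ when $|C(\kappa)| = 1$ and
\[
h(\kappa) \;=\; 1 + \min_{\gamma \in C(\kappa)}\ \frac{1}{|C(\kappa)|} \sum_{S \subsetneq V} |C(\kappa)^{\gamma}_{S}|\, h(\kappa^{\gamma}_{S}),
\]
where, after guessing $\gamma$ and learning that exactly the free positions in $S$ are correct, $\kappa^{\gamma}_{S}$ is the new configuration and $C(\kappa)^{\gamma}_{S}$ its candidate set. Since \basicstrat uses $1+\exc(\sigma)$ rounds (Theorem~\ref{thm:one-perm}) and hence $(n+1)/2$ on average, $h(S_n) \le (n+1)/2$, so the entire task is the matching lower bound $h(S_n) \ge (n+1)/2$.

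I would establish this by exhibiting a \emph{potential} $\Phi$ on configurations satisfying (i) $\Phi(\kappa) \le 1$ when $|C(\kappa)| = 1$; (ii) $\Phi(S_n) = (n+1)/2$; and (iii) for \emph{every} $\kappa$ and \emph{every} $\gamma \in C(\kappa)$,
\[
\Phi(\kappa) \;\le\; 1 + \frac{1}{|C(\kappa)|} \sum_{S \subsetneq V} |C(\kappa)^{\gamma}_{S}|\, \Phi(\kappa^{\gamma}_{S}).
\]
Induction on $|C(\kappa)|$ then yields $h \ge \Phi$, and in particular $h(S_n) \ge \Phi(S_n) = (n+1)/2$. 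The natural candidate is $\Phi(\kappa) = \frac{1}{|C(\kappa)|}\sum_{f \in C(\kappa)} \rho_{\mathcal A}(f,\kappa)$, where $\rho_{\mathcal A}(f,\kappa)$ is the number of rounds a reference strategy $\mathcal A$ takes on $f$ starting from $\kappa$, and $\mathcal A$ is a strategy defined directly on configurations that reproduces \basicstrat on the configurations \basicstrat visits (one has latitude in defining $\mathcal A$ on the others, e.g.\ by applying the circular-shift rule to the smallest avoiding bijection). Then (i) and (ii) are immediate, and (iii) is exactly the statement that $\mathcal A$ is \emph{one-step optimal}: from any configuration, the first guess of $\mathcal A$ followed by $\mathcal A$ is at least as good in expectation as any other first guess followed by $\mathcal A$. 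Granting one-step optimality, the induction in fact shows $\mathcal A$ is globally optimal for the expected number of rounds.

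Two facts encourage this route. First, $\Phi$ is forced on the top two levels: for \emph{every} strategy exactly $A(n,0)=1$ permutation is solved in one round, and exactly $A(n,1) = 2^{n} - n - 1$ are solved in two rounds, since after any first guess $\gamma_1$ the attainable feedback sets are precisely the fixed-point sets of the non-identity permutations --- the subsets of $[n]$ with complement of size in $\{2,3,\dots,n\}$ --- regardless of $\gamma_1$; hence the earliest round in which two strategies can differ at all is the third. Second, for $n \le 4$ the one-step optimality (indeed, the full Conjecture~\ref{conj:optimal}) is exactly the domination statement recorded earlier.

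The main obstacle is step (iii). For $\kappa = S_n$ it is Theorem~\ref{thm:one-perm} together with a case analysis of the possible first guesses, but once $F$ is an arbitrary bipartite graph it is no longer clear that the circular-shift rule gives the best first move, or even what the optimal $\mathcal A$ should be: a clever guess might use the forbidden structure to collapse the candidate set quickly under some responses while only mildly hurting the others. Reasonable intermediate targets are: to prove (iii) for the configurations reachable within the first few rounds (extending the $A(n,0)$, $A(n,1)$ observation); to find a \emph{purely combinatorial} statistic $e(f,\kappa)$ --- generalizing $\exc$, vanishing when $|C(\kappa)|=1$, and decreasing on average by at most $1$ under any guess --- that can replace $\rho_{\mathcal A}$; or to settle for the weaker estimate $h(S_n) \ge (n+1)/2 - o(n)$ from a coarser potential. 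It is also worth noting that this plan requires only the aggregate inequality $\sum_{r\ge 1}(\#\{\text{permutations solved within } r \text{ rounds}\}) \le \sum_{r\ge 1}(\text{the same for } \basicstrat)$, which is strictly weaker than Conjecture~\ref{conj:optimal}.
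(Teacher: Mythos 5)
This statement is Conjecture~\ref{conj:optimal-weak}; the paper does not prove it (it is left open, as a weakening of Conjecture~\ref{conj:optimal}), and your submission does not prove it either. What you give is a sound and fairly standard framework --- a Bellman recursion for the optimal expected number of remaining rounds over information states $(V,W,F)$, plus a potential/policy-improvement argument showing that $h \ge \Phi$ would follow once the reference strategy $\mathcal{A}$ is shown to be one-step optimal against its own value function --- but the entire mathematical content of the conjecture is concentrated in your step (iii), which you explicitly leave unresolved. Conditions (i) and (ii) are indeed immediate (the value $(n+1)/2$ is just the symmetry $A(n,k)=A(n,n-k-1)$, as in the paper's corollary), so nothing beyond bookkeeping has been established: the claim that the circular-shift rule (or any explicit rule) minimizes the one-step expected continuation from an \emph{arbitrary} configuration with forbidden-pair structure $F$ is exactly where the difficulty lives, and no argument or even a plausible invariant is supplied for it.

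Two further points deserve care if you pursue this. First, the opening reduction --- ``an inconsistent guess wastes a round and is weakly dominated'' --- is asserted, not proved, and it is not obviously true for the expected-rounds criterion: a guess outside $C(\kappa)$ forfeits the chance of finishing that round but may induce a finer partition of the candidate set, and in related guessing games consistent-only play is known to be potentially suboptimal. Your induction on $|C(\kappa)|$ also quietly uses consistency (it guarantees every successor candidate set is a proper subset), so either the reduction must be justified or the recursion must minimize over all $\gamma \in S_n$ with a separate handling of non-splitting feedback. Second, the statement that \emph{every} strategy solves exactly $A(n,1)=2^n-n-1$ permutations in exactly two rounds holds only for strategies whose second guess is always consistent with the feedback; as stated it is an overclaim, though harmless to your outline. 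In sum: a reasonable research plan, correctly self-diagnosed, but with the decisive inequality missing it does not settle the conjecture, which the paper itself leaves open.
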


It remains to prove Lemma~\ref{lemma:one-perm}.  The idea is to keep track of
the sets $R_k$ of values placed further to the right by $\gamma_k$ than by
$\sigma$:
$$R_k := \{ \gamma_k(i) : \gamma_k(i) > \sigma(i) \}.$$
We will show that $\#R_k$ decreases by one each round, starting with $\#R_1 = \exc(\sigma)$,
until the final round where $R_{1+\exc(\sigma)} = \emptyset$ and
$\gamma_{1+\exc(\sigma)} = \sigma$.  This is illustrated in Figure~\ref{fig-n9-circles}.

 \begin{figure}
   \begin{center}
  \begin{wordle}[present color=WordleAbsent]{724853169}
          [{
              \draw[very thick,color=highlightcircle] (W-1-4) circle (.4cm);
              \draw[very thick,color=highlightcircle] (W-1-7) circle (.4cm);
              \draw[very thick,color=highlightcircle] (W-1-8) circle (.4cm);
              \draw[very thick,color=highlightcircle] (W-2-6) circle (.4cm);
              \draw[very thick,color=highlightcircle] (W-2-8) circle (.4cm);
              \draw[very thick,color=highlightcircle] (W-3-7) circle (.4cm);
          }]
    123456789
    821354679
    728153469
    724853169
  \end{wordle}
   \end{center}
   \caption{The deterministic strategy \basicstrat guessing the permutation
     $\sigma = (7, 2, 4, 8, 5, 3, 1, 6, 9)$.  We highlight the values in $R_k$ for each $k$; that is,
     those values that $\gamma_k$ places to the right of $\sigma$.}
     \label{fig-n9-circles}
 \end{figure}

\begin{proof}[Proof of Lemma~\ref{lemma:one-perm}]
Let $\sigma$ be the setter's permutation.  Run \basicstrat, producing a series of
guesses $\gamma_1$, \ldots, $\gamma_r$ for some $r$ with $\gamma_r = \sigma$.  Recall that
$U_k$ is the set of positions where $\gamma_k$ agrees with $\sigma$ and 
$R_k$ is the set of values that $\gamma_k$ places further right than $\sigma$ does.
(The values in $R_k$ are highlighted in Figure~\ref{fig-n9-circles}.)
Let $L_k$ be the misplaced values that are not in $R_k$:
the values that $\gamma_k$ places further to the left than $\sigma$ does.

Observe that $i \in R_1$ if and only if $i > \sigma^{-1}(i)$, meaning 
$\sigma^{-1}(i) \in \Exc(\sigma)$.  We see that $\#R_1 = \exc(\sigma).$

Assuming $U_k \neq [n]$, how does $R_{k+1}$ differ from $R_k$?
The set $[n]\setminus U_k$ has at least two elements,
because it is impossible to have exactly one number misplaced.
Write $[n]\setminus U_k = \{a_1, \ldots, a_\ell\}$ with $a_j < a_{j+1}$.
Since $\gamma_k(a_\ell)$ is misplaced and it is the rightmost misplaced number,
$\gamma_k(a_\ell) \in R_k$.
Following \basicstrat,
any values in the correct positions remain in the correct positions:
for $i\in U_k$, including $1 \leq i < a_1$, we have $\gamma_{k+1}(i) = \gamma_{k}(i) = \sigma(i)$.
Also following \basicstrat, the rightmost misplaced value moves
to the position of the leftmost misplaced value:
$\gamma_{k+1}(a_1) = \gamma_{k}(a_\ell)$.
So $\gamma_k(a_\ell) \not\in R_{k+1}$.

Any other values in $R_k$ move further right, and are also in $R_{k+1}$.  
Each value in $L_k$ moves right, but not beyond its location
in $\sigma$, so it is not in $R_{k+1}$.  We conclude that
$R_{k+1} = R_k \setminus \{a_\ell\}$, so $\#R_{k+1} = \#R_k - 1$.

\basicstrat terminates when $U_r = [n]$, meaning $\gamma_r = \sigma.$
This happens exactly when $R_r$ is empty, so
the number of rounds for \basicstrat is $1 + \#R_1 = 1 + \exc(\sigma)$.
\end{proof}

\section{Multicolor Permutation Wordle}
\label{sec:rainbow}

We now generalize from the ``monochrome'' case
to a ``multicolor'' version of the problem.  Imagine that, instead of
just choosing a permutation of $[n]$, the setter chooses an arrangement
of $n$ cards from a larger deck: each card has a rank between $1$ and $n$ and
a color.  We still require that the setter's arrangement contain each rank exactly once.
More formally:
\begin{definition1}
   A \emph{multicolor permutation} is
a permutation in $S_n$ together with a coloring $c \colon [n] \to \{0, 1, \ldots\}$.  An
\emph{$(s_1, \ldots, s_n)$-colored permutation} is one where $0 \le c(i) < s_i$ for all $i$,
and an $s$-colored permutation is one where $0 \le c(i) < s$.
  \end{definition1}

We focus on these cases with explicit bounds on $c(i)$ because the notion of a uniform random
$(s_1, \ldots, s_n)$-colored or $s$-colored permutation is well defined.  We will see that the analysis is
roughly the same whether we have $n$ parameters $(s_1, \ldots, s_n)$ or a single
parameter $s$, but the statements are simpler for a single $s$.

In \rainbow, the setter chooses a multicolor permutation $(\sigma, \kappa)$.
The feedback for guess $(\gamma, c)$ is
the set of positions $i$ where the guess agrees the setter's $\sigma$ and $\kappa$,
that is, $\gamma(i) = \sigma(i)$ and $c(\gamma(i)) = \kappa(\sigma(i))$.

\begin{remark1}
One could reasonably ask whether this is the right rule for the feedback.  The honest
answer is that this is the rule where the analysis of \basicstrat seems to lead to
interesting numbers.  For example, one interpretation of the Wordle rules would be to
get feedback of the form ``Red 1 in the right place; blue 2 is misplaced; there is no red 3.''
The two-color game with this more informative feedback rule reveals
which numbers are red and which blue after just one guess;
it is too close to the original ``monochrome''
\classic problem.
\end{remark1}

As in the monochrome version, the argument of Section~\ref{sec:avg-case} applies: we may
assume that the setter chooses $\sigma$ and $\kappa$ uniformly at random, and we may
employ a deterministic strategy.  Our first guess is $(\gamma_1, c_1)$ where $\gamma_1$ is the
identity permutation and $c_1(i) = 0$ for all $i$.

\mysubsection{\basicstrat for \rainbow}
\label{sec:rainbow-circular}

We now generalize from monochrome \basicstrat to the multicolor setting.
We rotate the values of $\gamma_r$
just as before, and we start with all colors equal to $0$.  When a
value with color $a$ loops all the way around to its starting position, that means the
color is incorrect, so we increment the color to $a+1$ as the value $i$ begins its next circuit.

\begin{definition1}[\basicstrat for \rainbow] Let \linebreak
$\gamma_1$ be the identity in $S_n$, and let $c_1 \equiv 0$.
Suppose that, in round
$r$, we submit guess $(\gamma_r, c_r)$ and receive feedback
$$U_r = \{i \mid \gamma_r(i) = \sigma_r(i)\ \mathrm{and}\ c_r(\gamma_r(i)) = \kappa(\sigma(i))\},$$
the set of positions $i$ where $(\gamma_r, c_r)$ and $(\sigma, \kappa)$ agree on both
the value and the color of that value.
If $U_r = [n]$, we are done in $r$
rounds. Otherwise, we define $\gamma_{r+1}$ and $c_{r+1}$ as follows:
write $[n] \setminus U_r = \{a_1, \ldots, a_{\ell}\}$ with $a_j < a_{j+1}$. Let
$$
\gamma_{r+1}(i) := \begin{cases}
\gamma_r(i) & \text{if $i \in U_r$,} \\
\gamma_r(a_{j-1}) & \text {if $i = a_j$ for some $j > 1$,} \\
\gamma_r(a_\ell) & \text {if $i=a_1$.}
\end{cases}
$$
Also,
$$
c_{r+1}(i) := \begin{cases}
c_r(i) + 1 & \text{$i = \gamma_{r+1}(a_j)$ for some $j > 1$ and $\gamma_{r+1}(a_{j-1}) < a_j \le \gamma_{r+1}(a_j)$,} \\
c_r(i) + 1 & \text{$i = \gamma_{r+1}(a_1)$ and either $a_1 \le \gamma_{r+1}(a_1)$ or $\gamma_{r+1}(a_\ell) < a_1$,} \\
c_r(i) & \text{otherwise}.
\end{cases}
$$
\end{definition1}

We next prove a multicolor analogue of Lemma~\ref{lemma:one-perm}.

\begin{lemma}\label{lemma:one-perm-multi}
The number of guesses it takes \basicstrat to guess a multicolor permutation $(\sigma, \kappa)$
is
$$
1 + \exc(\sigma) + \sum_{i=1}^n \kappa(i).
$$
\end{lemma}

The idea of the proof is to count down the number of rounds remaining.
We introduce quantities $Z_{k,i}$ representing the number of times after
round $k$ that the value $i$ will wrap around before we finish solving.
In each round, one value ($i = \gamma_k(a_\ell)$) wraps around, so the total
$\sum_i Z_{k,i}$ decreases by $1$. We are done when the countdown reaches zero.  See
Figure~\ref{fig-n5-multi} for an example.

\begin{figure}[h]
  \begin{center}
          \begin{wordle}[noletters]{12345}
        [{
            \node at (W-1-1) {\Large\bfseries\sffamily\color{multizero} 1};
            \node at (W-1-2) {\Large\bfseries\sffamily\color{multizero} 2};
            \node at (W-1-3) {\Large\bfseries\sffamily\color{multizero} 3};
            \node at (W-1-4) {\Large\bfseries\sffamily\color{multizero} 4};
            \node at (W-1-5) {\Large\bfseries\sffamily\color{multizero} 5};
            \node at (W-2-1) {\Large\bfseries\sffamily\color{multizero} 5};
            \node at (W-2-2) {\Large\bfseries\sffamily\color{multizero} 1};
            \node at (W-2-3) {\Large\bfseries\sffamily\color{multizero} 2};
            \node at (W-2-4) {\Large\bfseries\sffamily\color{multizero} 3};
            \node at (W-2-5) {\Large\bfseries\sffamily\color{multizero} 4};
            \node at (W-3-1) {\Large\bfseries\sffamily\color{multizero} 3};
            \node at (W-3-2) {\Large\bfseries\sffamily\color{multizero} 5};
            \node at (W-3-3) {\Large\bfseries\sffamily\color{multizero} 1};
            \node at (W-3-4) {\Large\bfseries\sffamily\color{multizero} 2};
            \node at (W-3-5) {\Large\bfseries\sffamily\color{multizero} 4};
            \node at (W-4-1) {\Large\bfseries\sffamily\color{multizero} 3};
            \node at (W-4-2) {\Large\bfseries\sffamily\color{multione} 2};
            \node at (W-4-3) {\Large\bfseries\sffamily\color{multizero} 5};
            \node at (W-4-4) {\Large\bfseries\sffamily\color{multizero} 1};
            \node at (W-4-5) {\Large\bfseries\sffamily\color{multizero} 4};
            \node at (W-5-1) {\Large\bfseries\sffamily\color{multizero} 3};
            \node at (W-5-2) {\Large\bfseries\sffamily\color{multione} 2};
            \node at (W-5-3) {\Large\bfseries\sffamily\color{multione} 1};
            \node at (W-5-4) {\Large\bfseries\sffamily\color{multizero} 5};
            \node at (W-5-5) {\Large\bfseries\sffamily\color{multizero} 4};
            \node at (W-6-1) {\Large\bfseries\sffamily\color{multizero} 3};
            \node at (W-6-2) {\Large\bfseries\sffamily\color{multione} 2};
            \node at (W-6-3) {\Large\bfseries\sffamily\color{multione} 5};
            \node at (W-6-4) {\Large\bfseries\sffamily\color{multione} 1};
            \node at (W-6-5) {\Large\bfseries\sffamily\color{multizero} 4};
            \draw[very thick,color=highlightcircle] (W-1-1) circle (.4cm);
            \draw[very thick,color=highlightcircle] (W-1-2) circle (.4cm);
            \draw[very thick,color=highlightcircle] (W-1-3) circle (.4cm);
            \draw[very thick,color=highlightcircle] (W-1-5) circle (.4cm);
            \draw[very thick,color=highlightcircle] (W-1-5) circle (.3cm);
            \draw[very thick,color=highlightcircle] (W-2-1) circle (.4cm);
            \draw[very thick,color=highlightcircle] (W-2-2) circle (.4cm);
            \draw[very thick,color=highlightcircle] (W-2-3) circle (.4cm);
            \draw[very thick,color=highlightcircle] (W-2-4) circle (.4cm);
            \draw[very thick,color=highlightcircle] (W-3-2) circle (.4cm);
            \draw[very thick,color=highlightcircle] (W-3-3) circle (.4cm);
            \draw[very thick,color=highlightcircle] (W-3-4) circle (.4cm);
            \draw[very thick,color=highlightcircle] (W-4-3) circle (.4cm);
            \draw[very thick,color=highlightcircle] (W-4-4) circle (.4cm);
            \draw[very thick,color=highlightcircle] (W-5-4) circle (.4cm);
        }]
    xxxxx
    xxxx5
    1xxx5
    12xx5
    12xx5
    12345
  \end{wordle}
  \end{center}
  \caption {Applying \basicstrat to two-color permutation Wordle with $n=5$.
    Here color $0$ is shown in \multizeroname,
    and color $1$ in \multionename.  The highlights show how often
    each number will need to wrap around; note that the $5$ is highlighted twice, once
    because of its location and once because of its color.}
  \label{fig-n5-multi}
 \end{figure}
\begin{proof}
As in the proof of Lemma~\ref{lemma:one-perm}, we construct a quantity which decreases by $1$ in
each round and which is $0$ when we have the right answer.
Suppose that, in round $k$, we have some guess $(\gamma_k, c_k)$.
For each $i$, let $\delta(i) = 1$ when $i$ occurs further to left in $\sigma$ than in the identity;
that is, $i > \sigma^{-1}(i)$, or $\sigma^{-1}(i)$ is an
excedance of $\sigma$. Let $\epsilon_k(i) = 1$ when $i > \gamma_k^{-1}(i)$.
Let $$Z_{k,i} = \kappa(i) - c_k(i) + \delta(i) - \epsilon_r(i)$$
and let $Z_k = \sum_{i=1}^n Z_{k,i}$.
We show that $Z_1 = \exc(\sigma) + \sum_i \kappa(i),$
that $(\gamma_k, c_k) \neq (\sigma, \kappa)$ implies $Z_k > 0,$
and that $Z_{k+1} = Z_k-1$, as illustrated by Figure~\ref{fig-n5-multi}.

Since $\gamma_1$ is the identity and $c_1 \equiv 0$,
we see that $Z_{1,i} \geq 0$ for all values $i$ and $Z_1 = \exc(\sigma) + \sum_i \kappa(i)$.
Note also that 
$\kappa(i) \geq c_k(i)$ for all $k$ and $i$, because, according to \basicstrat,
$c_k(i)$ is a nondecreasing sequence in $k$, and $c_{k+1}(i) = c_k(i)+1$ implies that
value $i$ with color $c_k(i)$ visited every position in $[n] \setminus U_k$ in earlier rounds.
When $\gamma_k^{-1}(i) \in U_k,$, we have $\delta(i) = \epsilon_k(i)$ and $\kappa(i) = c_k(i)$,
so $Z_{k,i}= Z_{k+1,i} = 0.$
When $U_k = [n]$, $\gamma_k = \sigma$, $c_k = \kappa$, and $Z_k = 0$.

Suppose $U_k \subsetneq [n].$
Enumerate $[n] \setminus U_k$ as an increasing sequence $a_1, \ldots, a_\ell$.
How do $Z_{k,i}$ and $Z_{k+1,i}$ compare when $i = \gamma_k(a_j)$ for $j < \ell$?
If $a_j < i \leq a_{j+1}$, then $c_{k+1}(i) = c_k(i)+1$, $\epsilon_k(i) = 0$, and $\epsilon_{k+1}(i) = 1$.
Otherwise, $c_{k+1}(i) = c_k(i)$ and $\epsilon_{k+1}(i) = \epsilon_k(i)$.  Either way, $Z_{k+1,i} = Z_{k,i}.$

It remains to consider $Z_{k,i}$ and compare $Z_{k+1, i}$ and $Z_{k, i}$ for $i = \gamma_k(a_\ell)$.
There are three possibilities.
If $i > a_\ell \ge a_1$, then $\epsilon_{k+1}(i) = \epsilon_k(i) = 1$ and $c_{k+1}(i) = c_k(i) + 1$.
If $a_\ell \ge a_1 \ge i$, then $\epsilon_{k+1}(i) = \epsilon_k(i) = 0$ and $c_{k+1}(i) = c_k(i) + 1$.
If $a_\ell \ge i > a_1$, then $\epsilon_{k+1}(i) = 1,$ $\epsilon_k(i) = 0$, $c_{k+1}(i) = c_k(i)$,
and $c_k(i) < \kappa(i) + \delta(i).$
In each case, $Z_{k,i} > 0$ and $Z_{k+1,i} = Z_{k,i} - 1$.
Since this is the only $i$ for which $Z_{k+1,i} \ne Z_{k,i}$, we have $Z_{k+1} = Z_k - 1$.
The positive value of $Z_{k,i}$ verifies that $(\gamma_k, c_k) \neq (\sigma, \kappa)$ implies $Z_k > 0.$

At round $r = 1+\exc(\sigma) + \sum_i \kappa(i)$, we have $Z_r = 0$. and, consequently,
$(\gamma_r, c_r) = (\sigma, \kappa)$.

\end{proof}

\begin{conjecture}\label{conj:rainbow-optimal}
  For each $r$, the probability that \basicstrat solves a uniformly chosen
  $(s_1, \ldots, s_n)$-colored permutation within $r$ rounds
  is optimal, that is, it is at least as large as for any other strategy.
\end{conjecture}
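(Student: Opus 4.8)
The plan is to follow the line of attack one would use for Conjecture~\ref{conj:optimal}, of which this statement is essentially the $s=1$ case dressed up with extra bookkeeping. First reduce to deterministic strategies: a randomized strategy is a mixture of deterministic ones, so each of its success probabilities $P_r$ is a convex combination of the $P_r$'s of deterministic strategies, and it therefore suffices to show that the profile $(P_1,P_2,\ldots)$ of \basicstrat dominates, coordinate by coordinate, the profile of every deterministic strategy $\mathcal S$. A deterministic strategy is a decision tree whose internal nodes are guesses and whose edges are labeled by feedback sets $U\subseteq[n]$; writing $d_{\mathcal S}(\sigma)$ for the depth of the leaf that a target $\sigma$ reaches, and recalling that \basicstrat solves exactly $\sum_{k<r}D_s(n,k)$ targets within $r$ rounds, the claim is
\[
  \#\{\sigma : d_{\mathcal S}(\sigma)\le r\} \;\le\; \sum_{k<r}D_s(n,k) \qquad\text{for all } r .
\]
Equivalently, for every cutoff $r$, \basicstrat solves at least as many targets within $r$ rounds as $\mathcal S$ does.

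Next, use the symmetry of the game to normalize the first move, and then induct on $n$. Right multiplication by a fixed suited permutation $\tau$ sends $(\sigma,\gamma_k)$ to $(\sigma\tau,\gamma_k\tau)$ without changing any feedback set, and since the setter's $\sigma$ is uniform on the wreath product $C_s\wr S_n$, replacing a strategy by its $\tau$-translate leaves every $P_r$ unchanged; taking $\tau$ to be the inverse of the first guess of $\mathcal S$, we may assume that first guess is the all-$0$ identity, exactly as for \basicstrat. After round $1$ with feedback $U_1$, the target is pinned on the positions of $U_1$, and on the remaining positions the residual problem is again a suited permutation Wordle --- but an \emph{enriched} one, since at each surviving position $k$ we have already excluded the pair $(0,k)$, and later guesses exclude further (color, value) pairs. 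So the inductive statement must be the optimality of \basicstrat among strategies for the more general states ``a set $A$ of active positions together with a list of forbidden (color, value) pairs at each position of $A$''. Granting it, the inductive hypothesis applies inside each feedback class, and all that remains is one inequality: that no redistribution of the surviving targets among the round-$1$ feedback classes can do better than the one \basicstrat produces. This is where the recurrence the paper establishes for $D_s(n,k)$, combined with a convexity argument, would enter.

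The main obstacle is exactly that final inequality, and I expect it to be genuinely hard: settling it would in particular settle Conjecture~\ref{conj:optimal}, which is known only for $n\le4$. Two layers of difficulty stand out. First, one must check that \basicstrat is optimal on the enriched states, not merely on plain suited Wordle; the forbidden-pair data interacts with the circular-shift rule in a way that is not obviously harmless, and finding a clean inductive invariant here is itself delicate. Second, even over plain states the needed ``no redistribution helps'' assertion is really a \emph{local} exchange inequality --- at every node of every competing decision tree, switching the guess to the \basicstrat guess must not lower any $P_r$ --- and proving it seems to require more than the counting recurrences of Proposition~\ref{prop:excrecur} and its suited analogue. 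My guess is that any eventual proof will come from identifying the right potential function on states: one that falls by exactly $1$ under a \basicstrat move (recovering Theorem~\ref{thm:one-perm} and its suited generalization) but can fall by at most $1$, in a suitable majorization sense, under an arbitrary guess. Without such a function I would regard the statement as an experimentally well-supported conjecture rather than a theorem within present reach.
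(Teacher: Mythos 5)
The statement you were asked about is stated in the paper as a \emph{conjecture}: the authors do not prove it (they prove the analogous dominance only for \classic with $n\le 4$, and report computer evidence for $n=5,6$), so there is no proof of record to compare yours against. Your submission, by your own account, is also not a proof: it is a reduction-and-plan. The reductions you do carry out are sound and match the paper's setup --- averaging over deterministic strategies, using right translation by a fixed suited permutation to normalize the first guess (this is exactly the symmetry the paper invokes to assume $\sigma$ uniform and $\gamma_1$ the all-$0$ identity), and recasting dominance as the inequality $\#\{\sigma: d_{\mathcal S}(\sigma)\le r\}\le\sum_{k<r}D_s(n,k)$ for every $r$.

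The genuine gap is the one you name yourself, and it is the whole content of the conjecture: the induction does not close. After conditioning on the round-$1$ feedback, the residual problems are not instances of \rainbow but enriched states carrying forbidden (color, value) pairs, so the inductive hypothesis you would need is strictly stronger than the statement being proved, and you have not formulated an invariant for which \basicstrat is even well-defined, let alone optimal, on those states. Likewise, the ``no redistribution helps'' exchange inequality across feedback classes is asserted as the place where the recurrence \eqref{eq:Drecur} ``would enter,'' but no such argument is given, and nothing in Proposition~\ref{prop:excrecur}, Theorem~\ref{thm:rainbow}, or Theorem~\ref{thm:one-perm} supplies it --- those results count how \basicstrat performs, not how an arbitrary competing guess can perform. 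Your closing suggestion of a potential function that drops by exactly $1$ under a \basicstrat move but by at most $1$ (in a majorization sense) under any move is a reasonable research direction, but as it stands the statement remains, both in the paper and in your write-up, an open conjecture rather than a theorem.
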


\mysubsection{Multicolor recurrences and generating functions}~\label{sec:rainbow-rec-gf}
\label{sec:rainbow-gf}

To understand the expected behavior of \basicstrat on multicolor permutations with
various numbers of colors, it is convenient to work with {\emph{generating functions}}.
On Page~\pageref{eq:euler-gf}, we mentioned the standard representation of the
Eulerian numbers~\eqref{eq:euler-gf}, restated here:

\begin{equation*}
\frac{\sum_k A(n,k) t^k}{(1-t)^{n+1}} = \sum_j (j+1)^n t^j.
\end{equation*}

We generalize this identity to the ``Wordle numbers''.  We prove our results in
the general multicolor form, and then give the simpler expressions we get in the
$s$-colored case.

\begin{definition1}
We let $\Dmult{n}(n,k)$ denote the number of $(s_1, \ldots, s_n)$-colored permutations of
length $n$ for which \basicstrat succeeds in exactly $k+1$ rounds.
We let $W_s(n,r)$ denote the number of $s$-colored permutations for
which \basicstrat succeeds in exactly $r+1$ rounds.
\end{definition1}

\begin{theorem}\label{thm:multi-gf}
\begin{equation}\label{eq:multi-gf}
\frac{\sum_k \Dmult{n}(n,k) u^k}{(1-u)\prod_\ell(1-u^{s_\ell})} = \sum_j (j+1)^n u^j. 
\end{equation}
\end{theorem}
\begin{proof}
By Lemma~\ref{lemma:one-perm-multi},
\begin{align*}
\sum_k \Dmult{n}(n,k) u^k & = \sum_{\sigma, \kappa} u^{\exc(\sigma) + \kappa(1) + \cdots + \kappa(n)} \\
& = \left(\sum_{\sigma} u^{\exc(\sigma)}\right)
\left(\sum_{\kappa} \prod_{\ell=1}^n u^{\kappa(\ell)}\right) \\
& = \left(\sum_k A(n,k) u^k\right) \prod_{\ell=1}^n (1 + u + \cdots + u^{s_\ell - 1}),
\intertext{since $A(n,k)$ counts excedances. Using~\eqref{eq:euler-gf},}
\sum_k \Dmult{n}(n,k) u^k & = \left(\sum_j (j+1)^n u^j\right)(1-u)^{n+1}
\prod_{\ell=1}^n \frac{1-u^{s_\ell}}{1-u}.
\end{align*}
The result follows.
\end{proof}

From the generating function~\eqref{eq:multi-gf} we can derive a recurrence for
$\Dmult{n}(n,k)$.  We use $\noell$ to denote the
$(n-1)$-long sequence obtained from $(s_1, \ldots, s_n)$ by removing $s_\ell$.

\begin{theorem}\label{thm:multi-recur}
The counts $\Dmult{n}(n,k)$ satisfy the recurrence
\begin{equation}\label{eq:multi-recur}
\begin{multlined}
\Dmult{n}(n,k) = (k+1)\Dmult{n-1}(n-1,k)
+ \sum_{r=1}^{s_n-1}\Dmult{n-1}(n-1,k-r) \\ - k\Dmult{n-1}(n-1,k-s_n)
+ \sum_{\ell=1}^n s_\ell W_{\noell}(n-1,k-s_\ell),
\end{multlined}
\end{equation}
with initial conditions $W_{()}(0,0) = 1$ and $W_{()}(0,k) = 0$ for $k \ne 0$.
\end{theorem}

The proof uses standard generating function techniques; see Appendix~\ref{sec:gf-proofs}.

Theorems~\ref{thm:multi-gf} and~\ref{thm:multi-recur} are both simpler in the
$s$-colored case:
\begin{theorem}\label{thm:D-gf}
$$\frac{\sum_k W_s(n,k) u^k}{(1-u)(1-u^s)^n} = \sum_j (j+1)^n u^j.$$
\end{theorem}

\begin{theorem}\label{thm:D-recur}\label{thm:rainbow}
The counts $W_s(n,k)$ satisfy the recurrence
\begin{equation}
\label{eq:Drecur}
W_s(n,k) =  (k+1) W_s(n-1,k) + (sn-k) W_s(n-1,k-s) + \sum_{\ell=1}^{s-1} W_s(n-1,k-\ell)
\end{equation}
with initial conditions $W_s(0,0) = 1$ and $W_s(0,k) = 0$ for $k \ne 0$.
\end{theorem}
Note that, when $s = 1$, this expression ~\eqref{eq:Drecur}
is the recurrence~\eqref{eq:eulerrecur} for the Eulerian numbers.

The recurrence \eqref{eq:Drecur} has a natural symmetry:

\begin{corollary}\label{cor:rainbow-count}
We have $W_s(n,sn-r-1) = W_s(n,r)$.
\end{corollary}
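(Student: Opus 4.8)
The goal is to show $D_s(n, sn-r-1) = D_s(n,r)$. The cleanest route is a direct bijection on the set $\Phi$: by Corollary~\ref{cor:rainbow}, $D_s(n,r)$ counts permutations in $\Phi \subset S_{sn}$ with exactly $r$ exceedances, so it suffices to exhibit an involution on $\Phi$ that sends a permutation with $r$ exceedances to one with $sn-1-r$ exceedances. The plan is to use the involution $\pi \mapsto \pi'$ where $\pi'$ is obtained by conjugating by the order-reversing permutation $w_0$ on the $sn$ lexicographically-ordered pairs, i.e. $\pi' = w_0 \pi w_0$. Since $w_0$ reverses the linear order, $i$ is an exceedance of $\pi$ ($\pi(i) > i$) iff $w_0(i)$ is an \emph{anti-exceedance} of $\pi'$ in the sense $\pi'(w_0(i)) < w_0(i)$; a standard count then gives $\exc(\pi') = sn - 1 - \exc(\pi)$ once one checks there are no fixed points to worry about in the wrong direction — more precisely, for any permutation $\rho \in S_m$ one has $\#\{i : \rho(i) > i\} + \#\{i : \rho(i) < i\} + \#\{i : \rho(i) = i\} = m$, and conjugating by $w_0$ swaps the first two counts while preserving the third, so $\exc$ and "strict anti-exceedances" are exchanged; combined with $\exc(\rho) + \operatorname{antiexc}(\rho) \le m - 1$ this is not yet quite $sn-1-r$, so a little care is needed (see below).

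The main work, and the likely obstacle, is verifying that this involution preserves the set $\Phi$, i.e. that $w_0 \pi w_0 \in \Phi$ whenever $\pi \in \Phi$. Using Definition~\ref{def:phi}, a permutation in $\Phi$ has the form $\phi(\tau)\colon (c,j) \mapsto (c_j \modplus c,\, i_j)$; one must compute what conjugation by $w_0$ does to this and recognize the result as $\phi(\tau')$ for some explicit suited permutation $\tau'$. Writing $w_0(c,j) = (s-1-c, \iota(j))$ where $\iota$ is the order-reversal on $[n]$ — this is the point to get exactly right, since the lexicographic order on pairs does \emph{not} reverse coordinatewise in an obvious way: $w_0$ pairs the top element $(s-1,n)$ with the bottom $(0,1)$, so $w_0(c,j) = (s-1-c,\, n+1-j)$ — one chases the composition $w_0 \circ \phi(\tau) \circ w_0$ and reads off that it equals $\phi(\tau')$ where $\tau'$ has $i'_j = n+1 - \iota^{-1}(\ldots)$ and suits shifted appropriately; the modular bookkeeping on the $c$-coordinates is the fiddly part.

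An alternative, if the bijection turns out to be notationally painful, is to prove the symmetry purely from the recurrence \eqref{eq:Drecur} by induction on $n$, exactly as the parenthetical remark after Proposition~\ref{prop:excrecur} does for the Eulerian numbers: assume $D_s(n-1, s(n-1)-q-1) = D_s(n-1,q)$ for all $q$, substitute $r \mapsto sn-r-1$ into \eqref{eq:Drecur}, and match terms. One would check that the term $(r+1)D_s(n-1,r)$ in the recurrence for $D_s(n,sn-r-1)$ becomes $(sn-r)D_s(n-1, sn-r-1)$, which under the inductive hypothesis is $(sn-r)D_s(n-1,(r-s))$, matching the second term of the recurrence for $D_s(n,r)$; symmetrically the $(sn-r)D_s(n-1,\cdot)$ term swaps with the $(r+1)D_s(n-1,r)$ term; and the sum $\sum_{t=1}^{s-1} D_s(n-1,\cdot)$ is carried to itself because $t \mapsto s - t$ is a bijection of $\{1,\ldots,s-1\}$ and the inductive hypothesis converts the shifted arguments correctly. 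The index arithmetic here is routine but must be done carefully; I would present the recurrence-based proof as the main argument and mention the $w_0$-conjugation bijection as the conceptual explanation.
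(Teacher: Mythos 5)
Your recurrence-based induction is the right argument and is essentially the paper's own: the paper gives no separate proof, presenting the corollary as the ``natural symmetry'' of the recurrence \eqref{eq:Drecur}, and your term-matching is exactly the verification — substituting $r' = sn-r-1$, the first term $(r'+1)D_s(n-1,r')=(sn-r)D_s(n-1,sn-r-1)$ becomes $(sn-r)D_s(n-1,r-s)$ by the inductive hypothesis (since $sn-r-1=s(n-1)-(r-s)-1$), the second term symmetrically becomes $(r+1)D_s(n-1,r)$, and the sum is permuted by $t\mapsto s-t$. Two caveats. First, anchor the induction at $n=1$ (checked directly; $D_s(1,r)=1$ for $0\le r\le s-1$): the symmetry is false at $n=0$, since $D_s(0,-1)=0\ne 1=D_s(0,0)$, so the inductive hypothesis cannot be invoked at level $0$, and the term-by-term matching does break in the step $n=1$ even though the totals agree; the same implicit restriction to $n\ge 1$ applies to $A(n,k)=A(n,n-k-1)$. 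Second, the $w_0$-conjugation map you offer as the conceptual explanation does not work, and the obstacle is not the modular bookkeeping: conjugation by $w_0$ does preserve $\Phi$ (one checks $w_0\,\phi(\tau)\,w_0=\phi(\tau')$ with $i'_j=n+1-i_{n+1-j}$ and $c'_j=0\modminus c_{n+1-j}$), but it sends $\exc(\pi)$ to $sn-\exc(\pi)-\#\{\text{fixed points of }\pi\}$, not to $sn-1-\exc(\pi)$; for instance the identity lies in $\Phi$, is fixed by the conjugation, and has $0$ exceedances, whereas the symmetry would require an image with $sn-1$ exceedances. So either drop that aside or replace it with a genuinely exceedance-complementing involution; the recurrence induction stands on its own.
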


\mysubsection{Sums of consecutive values}~\label{sec:relate-Eulerian}
\label{sec:rainbow-consecutive}

We can find patterns in the sums of consecutive values of these
sequences $W_s(n,r)$.  For example, consider adding pairs of numbers from
$W_2(n,r)$; the case $n=4$ is shown in Figure~\ref{fig:D24}.  If we start with the
first two numbers, then the next two, and so on, we obtain multiples of the
Eulerian numbers: the sequence is $2^n A(n,r)$.  However, if we instead start with
just one number, then two at a time, 
we obtain a different known combinatorial sequence: the Eulerian numbers
of type B~\cite{Chow-Gessel}. We now
develop a recurrence for these sums of consecutive values of Wordle numbers;
in Section~\ref{sec:higher}, we will show that these sums give the ``higher-order
Eulerian numbers''~\cite{steingrimsson}.

\begin{remark1}
  Given two sequences with the same sum,
  it is not surprising that one can write down a sequence that simultaneously
  refines both of them; that is, the numbers
  can add to one or the other depending on how we pair
them up.  What is surprising is that this particular
sequence $W_2(n,r)$ arises in its own
right, in the analysis of two-color \basicstrat.  One is naturally led to wonder what else
these $W_s$ numbers might be counting.
\end{remark1}

\begin{figure}
\begin{center}
\begin{tikzpicture}
\draw (0,0) node{1};
\draw (1,0) node{15};
\draw (2,0) node{61};
\draw (3,0) node{115};
\draw (4,0) node{115};
\draw (5,0) node{61};
\draw (6,0) node{15};
\draw (7,0) node{1};
\draw [decorate,decoration={brace,amplitude=5pt,raise=2ex}] (-.1,0) -- (1.2,0) node[midway,yshift=4.5ex]{16};
\draw [decorate,decoration={brace,amplitude=5pt,raise=2ex}] (1.8,0) -- (3.3,0) node[midway,yshift=4.5ex]{176};
\draw [decorate,decoration={brace,amplitude=5pt,raise=2ex}] (3.7,0) -- (5.2,0) node[midway,yshift=4.5ex]{176};
\draw [decorate,decoration={brace,amplitude=5pt,raise=2ex}] (5.8,0) -- (7.1,0) node[midway,yshift=4.5ex]{16};
\draw (0,0) node[yshift=-4.5ex]{1};
\draw (0,-.3) -- (0,-.55);
\draw [decorate,decoration={brace,amplitude=5pt,mirror,raise=2ex}] (0.8,0) -- (2.2,0) node[midway,yshift=-4.5ex]{76};
\draw [decorate,decoration={brace,amplitude=5pt,mirror,raise=2ex}] (2.7,0) -- (4.3,0) node[midway,yshift=-4.5ex]{230};
\draw [decorate,decoration={brace,amplitude=5pt,mirror,raise=2ex}] (4.8,0) -- (6.2,0) node[midway,yshift=-4.5ex]{76};
\draw (7,0) node[yshift=-4.5ex]{1};
\draw (7,-.3) -- (7,-.55);
\end{tikzpicture}
\end{center}
\caption{Two ways to group the sequence $W_2(4,k)$.}\label{fig:D24}
\end{figure}

\begin{definition1}
For each $s, n, k$, we define $F_s(n,k)$ to be the sum of $s$ consecutive values of $W_s(n,r)$:
$$
F_s(n,k) = \sum_{\ell=0}^{s-1} W_s(n,k-\ell).
$$
\end{definition1} 

\begin{theorem}\label{thm:F-gf}
\begin{equation}\label{eq:F-gf}
\frac{\sum_k F_s(n,k) u^k}{(1-u^s)^{n+1}} = \sum_j (j+1)^n u^j.
\end{equation}
\end{theorem}

\begin{proof}
From the definition, $\sum_k F_s(n,k) u^k = (1 + \cdots + u^{s-1}) \sum_k W_s(n,k) u^k$.
The result follows immediately from Theorem~\ref{thm:D-gf}.
\end{proof}

\begin{theorem}\label{thm:sums}\label{thm:F-recur}
The numbers $F_s(n,k)$ satisfy the recurrence
$$
F_s(n,k) = (k+1)F_s(n-1,k) + (s(n+1)-k-1) F_s(n-1,k-s).
$$
with initial conditions $F_s(0,k) = 1$ when $0 \le k < s$ and $F_s(0,k) = 0$ otherwise.
\end{theorem}

Again, a proof can be found in Appendix~\ref{sec:gf-proofs}.

\begin{remark1}\label{rem:log-concave-D-F}
  The sequences $\Dmult{n}(n,\cdot)$ and $F_s(n,\cdot)$ are log-concave.
Note that \linebreak $\Dmult{n}(n,\cdot)$ is the iterated convolution of
  $A(n,\cdot)$ with sequences $(1, \ldots, 1)$ of length $s_\ell$ for
$1\leq \ell \leq n$, and $F_s(n,\cdot)$ is the convolution of $W_s(n,\cdot)$
with $(1, \ldots, 1)$.  The
convolution of two log-concave sequences is log-concave~\cite{KeGe}.
\end{remark1}  

\section{Higher-order Eulerian numbers}
\label{sec:higher}
\label{sec:higher-order}

\mysubsection{Multiples of Eulerian numbers}
\label{sec:higher-e1}

As noted above, sums of consecutive values of $W_2(n,r)$ recover
the values $2^n A(n,r)$.
We begin our discussion
of higher-order Eulerian numbers by generalizing this observation to arbitrary $s$.

\begin{proposition}\label{prop:sums-euler-A}
For all $n$ and $k$,
$$
F_s(n,sk+s-1) = s^n A(n,k).
$$
\end{proposition}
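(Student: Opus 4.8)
The plan is to prove $F_s(n,sk) = s^n A(n,k)$ by induction on $n$, using the recurrence for $F_s$ from Proposition~\ref{prop:sums} and the Eulerian recurrence from Proposition~\ref{prop:excrecur}. The base case $n=0$ is immediate: $F_s(0,0) = 1 = s^0 A(0,0)$ (since $m=0$ lies in the range $-(s-1)\le m \le 0$), and for $k \ne 0$ we have $F_s(0,sk) = 0 = A(0,k)$ because $sk$ is outside that range.

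For the inductive step, I would specialize Proposition~\ref{prop:sums} at $m = sk$:
\begin{equation*}
F_s(n,sk) = (sk+s)F_s(n-1,sk) + (sn-sk)F_s(n-1,sk-s) = s(k+1)F_s(n-1,sk) + s(n-k)F_s(n-1,s(k-1)).
\end{equation*}
Both arguments appearing on the right, namely $sk$ and $s(k-1)$, are again multiples of $s$, so the inductive hypothesis applies directly: $F_s(n-1,sk) = s^{n-1}A(n-1,k)$ and $F_s(n-1,s(k-1)) = s^{n-1}A(n-1,k-1)$. Substituting,
\begin{equation*}
F_s(n,sk) = s(k+1)s^{n-1}A(n-1,k) + s(n-k)s^{n-1}A(n-1,k-1) = s^n\bigl[(k+1)A(n-1,k) + (n-k)A(n-1,k-1)\bigr],
\end{equation*}
and the bracketed expression is exactly $A(n,k)$ by the Eulerian recurrence~\eqref{eq:eulerrecur}. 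This completes the induction.

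I do not anticipate any serious obstacle here; the key observation that makes it work is that the $F_s$ recurrence shifts the second index by exactly $s$, so that evaluating at a multiple of $s$ keeps all arguments in the ``multiple of $s$'' locus where the inductive hypothesis lives, and the coefficients $sk+s$ and $sn-sk$ factor as $s$ times the Eulerian coefficients $k+1$ and $n-k$. The only point requiring a moment's care is the base case bookkeeping — checking that $m = sk$ falls in the support $\{-(s-1),\dots,0\}$ precisely when $k = 0$ — and, if one wants, noting that the identity is vacuously consistent with the boundary convention $A(n,k)=0$ for $k<0$ or $k\ge n$, which matches $F_s$ vanishing outside its own support.
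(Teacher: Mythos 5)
Your proof is correct and follows essentially the same route as the paper: specialize the recurrence of Proposition~\ref{prop:sums} at $m=sk$, factor out $s$, and observe that the resulting recurrence matches the Eulerian recurrence~\eqref{eq:eulerrecur} scaled by $s^n$, with the same base case check at $n=0$. Your version just spells out the inductive substitution that the paper leaves implicit.
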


\begin{proof}
For $n = 0$, $F_s(n,sk+s-1) = 1$ when $k=0$ and $F_s(n,sk+s-1) = 0$ otherwise.

For $n > 0$, by Theorem~\ref{thm:sums},
\begin{align*}
F_s(n,sk+s-1) & = (sk+s)F_s(n-1,sk+s-1) + (s(n+1)-sk-s) F_s(n-1,sk-1) \\
& = s\left[(k+1)F_s(n-1,sk+s-1) + (n-k)F_s(n-1,s(k-1)+s-1)\right].
\end{align*}
By Proposition~\ref{prop:excrecur}, this same recurrence is satisfied by $s^n A(n,k)$.
\end{proof}

\begin{corollary}\label{cor:sums}
The probability that \basicstrat succeeds on $s$-colored \classic in at most
$sr$ rounds is the same as the probability that \basicstrat succeeds on \classic in at most $r$ rounds.
\end{corollary}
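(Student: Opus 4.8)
The plan is to sum the identity of Proposition~\ref{prop:sums-euler-A} over a block of indices $k$ and match each side against the relevant success probability. First I would record the two probabilities explicitly. For \classic, Theorem~\ref{thm:main} gives that the number of permutations in $S_n$ solved in exactly $j$ guesses is $A(n,j-1)$, so the probability of succeeding within $r$ rounds is $\frac{1}{n!}\sum_{k=0}^{r-1} A(n,k)$. For $s$-\rainbow there are $s^n\,n!$ suited $n$-permutations in all, and $D_s(n,m)$ counts those for which \basicstrat succeeds in exactly $m+1$ rounds; hence the probability of succeeding within $sr$ rounds is $\frac{1}{s^n\,n!}\sum_{m=0}^{sr-1} D_s(n,m)$.

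Next I would split $\sum_{m=0}^{sr-1} D_s(n,m)$ into $r$ consecutive blocks of length $s$: writing $m = sk+\ell$ with $0 \le k \le r-1$ and $0 \le \ell \le s-1$, the definition of $F_s$ gives $\sum_{m=0}^{sr-1} D_s(n,m) = \sum_{k=0}^{r-1} F_s(n,sk)$. Applying Proposition~\ref{prop:sums-euler-A} termwise turns this into $s^n \sum_{k=0}^{r-1} A(n,k)$, and dividing by $s^n\,n!$ yields exactly $\frac{1}{n!}\sum_{k=0}^{r-1} A(n,k)$, which is the \classic probability recorded above. This is the whole argument.

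The only real care needed is the bookkeeping of the round-counting conventions: $D_s(n,m)$ is offset by one (it counts $(m{+}1)$-round solves) whereas in the unsuited case the count of $j$-round solves is $A(n,j-1)$, and the suited sample space has size $s^n\,n!$ rather than $n!$ — the two extra factors of $s^n$ cancel precisely against the factor produced by Proposition~\ref{prop:sums-euler-A}. Once these conventions are pinned down there is no substantive obstacle; the corollary is a one-line consequence of that proposition. (One may also note in passing that for $r \ge n$ both probabilities equal $1$, consistent with \basicstrat solving every \classic instance in at most $n$ rounds and every $s$-\rainbow instance in at most $sn$ rounds, the latter by the support of $D_s(n,\cdot)$ implied by Corollary~\ref{cor:rainbow-count}.)
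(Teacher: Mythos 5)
Your argument is correct and is exactly the route the paper intends: the corollary is stated as an immediate consequence of Proposition~\ref{prop:sums-euler-A}, obtained by grouping the counts $D_s(n,m)$ for $m \le sr-1$ into $r$ blocks $F_s(n,sk)$ and normalizing by the $s^n\,n!$ suited permutations. Your bookkeeping of the round-offset in the definition of $D_s(n,r)$ and of the factor $s^n$ is right, so there is nothing to add.
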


\mysubsection{$z$-excedances}~\label{sec:higher:zexc}
\label{sec:higher-z-excedances}

To describe the other possible sums of $W_s(n,r)$, we need to introduce
the higher-order Eulerian numbers. We follow (and slightly generalize)
the approach of Steingr{\'\i}msson~\cite{steingrimsson}.
Let $S_{n,s}$ denote the set of $s$-colored permutations of length $n$.

\begin{definition1}
For $(\pi, \notkappa) \in S_{n,s}$, a \emph{$z$-excedance} is a value $j$ for which
either $\pi(j) > j$, or for which $\pi(j) = j$ and $\notkappa(\pi(j)) \ge z$.  Equivalently, it is
a value $j$ where $(\pi(j), \notkappa(\pi(j))) \succ (i, z-1)$ lexicographically.

For $1 \le z \le s$, we let $\Esz(n,k)$ denote the number of $s$-colored permutations of length $n$ with $k$ $z$-excedances. 
 \end{definition1}

Steingr{\'\i}msson analyzes the case $z=1$ and derives the recurrence
$$
E_{s,1}(n,k) = (sk+1)E_{s,1}(n-1,k) + (s(n-k)+s-1)E_{s,1}(n-1,k-1)
$$
and the generating function
\begin{equation}\label{eq:ES-gf}
\frac{\sum_k E_{s,1}(n,k) t^k}{(1-t)^{n+1}} = \sum (sj+1)^n t^j.
\end{equation}
For $s = 2$, these are the Eulerian numbers of type B.

We generalize Steingr{\'\i}msson's results to larger values of $z$:

\begin{theorem}\label{thm:zexcrecur}
The numbers $\Esz(n,k)$ satisfy the recurrence
$$
\Esz(n,k) = (sk+z)\Esz(n-1,k) + (s(n-k)+s-z)\Esz(n-1,k-1)
$$
with initial conditions $\Esz(0,k) = 1$ when $k=0$ and $\Esz(0,k)=0$ for $k \ne 0$.
\end{theorem}

The proof follows the same lines as that of Proposition~\ref{prop:excrecur}.
\begin{proof}
We induct on $n$; the unique element of $S_0$ has $0$ $z$-excedances,
verifying the initial condition.

Every $s$-colored permutation $(\pi, \notkappa)\in S_{n, s}$ can be uniquely constructed as follows:
start with an $s$-colored permutation $(\beta, \notlambda) \in S_{n-1, s}$.
Extend $\notlambda$ to $\notkappa$ by $\notkappa(i) = \notlambda(i)$ for $i < n$ and
making a choice $\notkappa(n)$ of a color for value $n$.
To make $\pi$, extend $\beta$ to $S_n$ by making $n$ a fixed point, then let
$\pi= \alpha\beta$ with $\alpha$ either the identity or the swap of $i$ and $n$ for some $i <n$.

How can $(\pi, \notkappa)$ have exactly $k$ $z$-excedances?  There are two possibilities. First,
$(\beta, \notlambda)$ could have $k$ $z$-excedances, and either $\alpha$ swaps $n$ with one of the
$k$ $z$-excedances (in which case any color $\notkappa(n)$ works), or $\alpha$ leaves $n$ in place
and $\notkappa(n) < z$. Given $(\beta, \notlambda)$ with $k$ $z$-excedances, this yields $sk + z$
$s$-colored permutations $(\pi, \notkappa)$.

Second, $(\beta, \notlambda)$ could have $k-1$ $z$-excedances, and either $\alpha$ swaps $n$ with one
of the $n-k$ non-$z$-excedances (in which case any color $\notkappa(n)$ works), or $\alpha$ leaves
$n$ in place and $\notkappa(n) \ge z$ (giving one more $z$-excedance). Given 
$(\beta, \notlambda)\in S_{n-1,s}$ with $k-1$ $z$-excedances, this yields $s(n-k) + s-z$
$s$-colored permutations $(\pi,\notkappa)\in S_{n,s}$.

The sum of these two cases yields the desired recurrence.
\end{proof}

Note that $E_{s,s}(n,k)$ is simply $s^n A(n,k)$; since no color can exceed $s-1$,
the colors play no role and we simply count excedances.  When $z=s$, the
recurrence of Theorem~\ref{thm:zexcrecur} simplifies to
$$
E_{s,s}(n,k) = s(k+1)E_{s,s}(n-1,k) + s(n-k)E_{s,s}(n-1,k-1), 
$$
which is the recurrence for $F_s(n,sk+s-1)$ discussed in
Proposition~\ref{prop:sums-euler-A}.  We can apply similar logic to any value of $z$.

\begin{proposition}\label{prop:sums-euler-B}
For $1 \le z \le s$, we have
$$
F_s(n,sk+z-1) = \Esz(n,k).
$$
\end{proposition}

\begin{proof}
When $n=0$, $F_s(n,sk+z-1)$ is $1$ when $k=0$ and $0$ otherwise, as desired.

For $n > 0$, by Theorem~\ref{thm:sums},
$$
F_s(n,sk+z-1) = (sk+z)F_s(n-1,sk+z-1) + (s(n-k)+s-z)F_s(n-1,s(k-1)+z-1).
$$
By Proposition~\ref{thm:zexcrecur}, this is the recurrence satisfied by
$\Esz(n,k)$.
\end{proof}

\begin{proposition}\label{prop:divisibility}
$$
E_{ds,dz}(n,k) = d^n \Esz(n,k).
$$
\end{proposition}

\begin{proof}
The proof is the same as that of Proposition~\ref{prop:sums-euler-A}, except that we
pull out a factor of $d$, rather than $s$, each time we apply the recurrence.
\end{proof}

\begin{remark1}\label{rem:log-concave-Esz}
Since $\Esz(n,\cdot)$ is a subsequence of the log-concave $F(n,\cdot)$ (see
Remark~\ref{rem:log-concave-D-F}),  $\Esz(n,\cdot)$ is also log-concave.
\end{remark1}

\begin{theorem}\label{thm:Esz-gf}
For $1 \le z \le s$,
$$\frac{\sum_k \Esz(n,k)t^k}{(1-t)^{n+1}} = \sum_j(sj+z)^n t^j.$$
\end{theorem}

Note that, when $z=1$, Theorem~\ref{thm:Esz-gf}
recovers Steingr{\'\i}msson's \eqref{eq:ES-gf}.
\begin{proof}
By Theorem~\ref{thm:F-gf},
$$
\sum_k F_s(n,k) u^k = (1-u^s)^{n+1} \sum_j (j+1)^n u^j.
$$
On each side, consider only terms corresponding to $u^k$ with $k \equiv z-1 \pmod s$:
$$
\sum_k F_s(n,sk+z-1) u^{sk+z-1} = (1-u^s)^{n+1} \sum_j (sj+z)^n u^{sj+z-1}.
$$
We divide both sides by $u^{z-1}$ and set $t = u^s$: with
Proposition~\ref{prop:sums-euler-B}, this proves the identity.
\end{proof}

We can also prove a version of the classic identity
$$
\sum_{n=0}^\infty \frac{x^n}{n!}\left(\sum_{k=0}^n A(n,k)t^k\right) = \frac{t-1}{t - e^{x(t-1)}}
$$
and of Ste{\'\i}ngrimsson's similar identity for $E_{s,1}(n,k)$~\cite{steingrimsson}.

\begin{corollary}
$$
\sum_{n=0}^\infty \frac{x^n}{n!}\left(\sum_{k=0}^n \Esz(n,k)t^k\right) =
\frac{(1-t)e^{zx(1-t)}}{1 - te^{sx(1-t)}}.
$$
\end{corollary}

Using Theorem~\ref{thm:F-gf}, we can do the same for $F_s$:
\begin{theorem}
$$
\sum_{n=0}^\infty \frac{x^n}{n!}\left(\sum_{k=0}^n F_s(n,k)u^k\right) =
\frac{u^s - 1}{u - e^{x(u^s-1)}}.
$$
\end{theorem}

\section{Connections with Mastermind}
\label{sec:mastermind}

There are natural connections between Wordle and the classic game Mastermind,
invented by Mordecai Meirowitz.  In Mastermind, 
the setter chooses a ``code'': a string of
length $n$ from an alphabet of size $m$ (which we can write as $[m]$). In each
round, the guesser tries a string in $[m]^n$, and the setter responds with two numbers: the
number of entries that are correct (right number, right place) and the number of additional
entries that are misplaced (right number, wrong place).  This is
a similar setup to Wordle, except that in Wordle we find out specifically which entries
are correct or misplaced, instead of just a count.

Donald
Knuth~\cite{Knuth1976} showed that, for the original game ($n = 4$ and $m = 6$), five rounds
are sufficient for the guesser to guess the code.  Many others have evaluated the
game for various values of $n$ and $m$.
The typical emphasis is on a worst-case analysis; one
can imagine the setter adversarially altering the code to be consistent with all information
presented thus far, with the goal of maximizing the number of rounds.  One reason for
this worst-case analysis is that not all codes are the same;
even if we do something along the lines of
Section~\ref{sec:avg-case}, permuting the colors and the entries, there is a difference between
the code $(1,2,3,4)$ (with no repeated entries) and $(1,1,2,3)$ (with one repeat).
An ``average-case'' analysis only makes sense with respect to some distribution on the
setter's choices.

One Mastermind variant is ``permutation Mastermind'', in which $m = n$ and both the setter's
code and the guesser's guesses must contain each value exactly once.  (In this variant, the
setter responds with the number $c$ of correct entries, since the number misplaced
is simply $n-c$.)  El Ouali et al.~\cite{EOGlSaSr2018} have shown that $n \log_2 n + O(n)$ queries suffice.

Li and Zhu~\cite{LiZhu2024} have analyzed a generalized version of Wordle, with a
code from the alphabet $[m]^n$; essentially, they are playing Mastermind with Wordle-like
feedback. Among other things, they show that, in the permutation setting where
$m = n$ and the code is required to use each value exactly once, then the guesser
needs $n$ guesses in the worst case; this is true even if the guesses are are allowed to
have values multiple times.  This worst-case bound applies directly to our setting; their
result shows that, for any permutation Wordle strategy, there must be some input where
it requires $n$ guesses.

Li and Zhu give a
simple strategy with a matching upper bound of $n$ guesses for the general case $m=n$:
the first $n-1$ guesses are the strings $1^n$, $2^n$, $\ldots$, $(n-1)^n$, at which point
the guesser knows the code.  Our result can be considered an improvement on this simple
strategy, in that it has the same worst-case behavior but better average-case behavior.

\section{Open questions}
\label{sec:conclusion} \label{sec:open}

We have described the game \classic and its generalization \rainbow.  We have also
described the strategy \basicstrat, which we believe solves both games optimally.

The natural open question is whether \basicstrat really is optimal for \classic,
or for \rainbow.  Does Conjecture~\ref{conj:optimal} hold---that is, does \basicstrat
dominate all other strategies, in the sense that, for
all $r$, it wins more often within $r$ rounds than any other strategy?  If not, is it
optimal in some more limited sense (e.g., average number of rounds)?  Suppose we allow the
guesser to guess arbitrary strings of (multicolor) numbers,
as in Li--Zhu~\cite{LiZhu2024}: does that enable a more efficient strategy?

Aurora Hiveley~\cite{Hiveley2025} has done some computer analyses of \basicstrat,
and also proven Conjecture~\ref{conj:optimal} in the case $r \le 3$.  It would be interesting to
see how far her work can be extended.

We introduced the Wordle numbers $W_s(n,r)$, and showed that they simultaneously
refine the Eulerian numbers $A(n,k)$ and the higher-order $\Esz(n,k)$. Do these numbers
count anything on their own, outside of the permutation Wordle setting?

There is a natural connection between the higher-order $\Esz(n,k)$ and the joint
distribution of excedances and fixed points.  More precisely, if we let
$J_n(x,y) = \sum_{\sigma \in S_n} x^{\exc(\pi)} y^{\fixed(\pi)}$, where $\fixed(\pi)$ is the number
of fixed points of $\pi$, then
$$
\sum_t \Esz(n,k) t^k = s^n J_n\left(t, \frac{(s-z)t + z}{s}\right).
$$
Does this joint distribution shed light on these $\Esz$ values?

\section*{Acknowledgments}

The authors are indebted to Alex Miller for making the connection with excedances and for
proposing the two-colored
variant.  We would also like to thank Joe Buhler, Tim Chow, Keith Frankston,
Zachary Hamaker, Ben Howard, Danny Scheinerman, Jeff VanderKam, and Doron Zeilberger for helpful discussions.  And we appreciate the package {\tt wordle.sty}, by
Andrew Mathas and C{\'e}dric Pierquet, which we used to make the Wordle diagrams
throughout the paper.

\appendix

\section{Proving recurrences from generating functions}\label{sec:gf-proofs}

We now prove Theorem~\ref{thm:multi-recur}.

\begin{proof}[Proof of Theorem~\ref{thm:multi-recur}]
When $n=0$, Theorem~\ref{thm:multi-gf} gives
$$
\sum_k W_{()}(0,k) u^k = (1-u)\sum_j u^j = 1,
$$
which matches our initial condition.

For $n > 0$, it is convenient to break up the right-hand side of~\eqref{eq:multi-recur}
into three pieces:
\begin{align*}
X(n,k) & := (k+1)\Dmult{n-1}(n-1,k) - (k+1-s_n)\Dmult{n-1}(n-1,k-s_n), \\
Y(n,k) & := \sum_{r=1}^{s_n} \Dmult{n-1}(n-1,k-r), \\
Z(n,k) & := \sum_{\ell=1}^{n-1} s_\ell W_{\noell}(n-1,k-s_\ell).
\end{align*}
We wish to show that, for all $k$, $\Dmult{n}(n,k) = X(n,k) + Y(n,k) + Z(n,k)$, or
\begin{equation}\label{eq:multi-recur-pf-goal}
\sum_k \Dmult{n}(n,k) u^k = \sum_k X(n,k) u^k + \sum_k Y(n,k) u^k + \sum_k Z(n,k) u^k.
\end{equation}.
We begin with $\sum_k X(n,k) u^k$.  Setting $k' = k-s_n$,
\begin{align*}
\sum_k X(n,k) u^k & = \sum_k (k+1) \Dmult{n-1}(n-1,k) u^k - u^{s_n} \sum_{k'} (k'+1)\Dmult{n-1}(n-1,k')u^{k'}
\\
& = (1 - u^{s_n}) \sum_k (k+1) \Dmult{n-1}(n-1,k) u^k \\
& = (1 - u^{s_n}) \frac{d}{du} \sum_k \Dmult{n-1}(n-1,k) u^{k+1} \\
& = (1 - u^{s_n}) \frac{d}{du} \left[\left(\sum_j (j+1)^{n-1} u^{j+1}\right)
(1-u)\prod_{\ell=1}^{n-1} (1-u^{s_\ell})\right].
\end{align*}
We express this derivative as a sum of three terms:
\begin{multline*}
\sum_k X(n,k) u^k = (1-u^{s_n})\sum_j(j+1)^n u^j (1-u) \prod_{\ell=1}^{n-1}(1-u^{s_\ell}) \\
- (1-u^{s_n})\sum_j(j+1)^{n-1} u^{j+1} \prod_{\ell=1}^{n-1}(1-u^{s_\ell}) \\
- (1-u^{s_n}) \sum_{\ell=1}^{n-1} s_\ell u^{s_\ell-1} \sum_j (j+1)^{n-1} u^{j+1} (1-u) \prod_{m\ne\ell,m=1}^{n-1}(1-u^{s_m}).
\end{multline*}
Each term can now be rewritten using~\eqref{eq:multi-gf}:
\begin{multline*}
\sum_k X(n,k) u^k = \sum_k \Dmult{n}(n,k) u^k - \frac{u(1-u^{s_n})}{1-u} \sum_k \Dmult{n-1}(n-1,k) u^k
\\ - \sum_{\ell=1}^{n-1} s_{\ell} u^{s_\ell} \sum_k W_\noell(n-1,k) u^k.
\end{multline*}
This is
$$
\sum_k X(n,k) u^k = \sum_k \Dmult{n}(n,k) u^k - \sum_k Y(n,k) u^k - \sum_k Z(n,k) u^k,
$$
demonstrating \eqref{eq:multi-recur-pf-goal}, as desired.
\end{proof}

The proof of Theorem~\ref{thm:F-recur} uses similar logic.

\begin{proof}[Proof of Theorem~\ref{thm:F-recur}]
By Theorem~\ref{thm:F-gf},
$$
\sum_k F_s(n,k) u^k = (1-u^s)^{n+1} \sum_j (j+1)^n u^j.
$$
When $n=0$, this gives $\sum F_s(0,k) u^k = (1-u^s)\sum_j u^j = 1 + \cdots + u^{s-1}$, which matches
our initial condition $F_s(0,k) = 1$ for $0 \leq k <s$ and $F_s(0,k) = 0$ otherwise.  For $n > 0$, let
$$
V(n,k) = (k+1)F_s(n-1,k) + (s-k-1)F_s(n-1,k-s).
$$
The calculation
\begin{align*}
\sum_k V(n,k) u^k & = \sum_k (k+1)F_s(n-1,k) u^k - u^s \sum_k (k-s+1) F_s(n-1,k-s) u^{k-s} \\
& = (1-u^s) \sum_k (k+1) F_s(n-1,k) u^k \\
& = (1-u^s) \frac{d}{du} u \sum_k F_s(n-1,k) u^k \\
& = (1-u^s) \frac{d}{du} \left[(1-u^s)^n \sum_j (j+1)^{n-1} u^{j+1}\right] \\
& = (1-u^s)^{n+1} \sum_j (j+1)^n u^j - nsu^{s-1} (1-u^s)^n \sum_j (j+1)^{n-1} u^{j+1} \\
& = \sum_k F_s(n,k) u^k - sn \sum_k F_s(n-1,k) u^{s+k} \\
& = \sum_k (F_s(n,k) - nsF_s(n-1,k-s))u^k,
\end{align*}
completes the proof.
\end{proof}

\bibliographystyle{plain}
\bibliography{shifts}

\end{document}